\definecolor{myblue}{rgb}{0.09,0.32,0.44} 
\newcommand\N{\mathbb{N}}
\newcommand\R{\mathbb{R}}
\newcommand\Z{\mathbb{Z}}
\newcommand\eps{\varepsilon}
\renewcommand{\P}{\mathbb{P}}
\newcommand{\E}{\mathbb{E}}
\newcommand\var{\operatorname{Var}}
\renewcommand\le{\leqslant}
\renewcommand\ge{\geqslant}
\theoremstyle{plain}
\newtheorem{theorem-main}{Theorem}
\newtheorem{corollary-main}{Corollary}
\newtheorem{theorem}{Theorem}
\newtheorem{proposition}[theorem]{Proposition}
\newtheorem{lemma}[theorem]{Lemma}
\newtheorem*{theorem-a}{Theorem A}
\newtheorem*{theorem-b}{Theorem B}
\newtheorem*{convention}{Convention}
\theoremstyle{definition}
\newtheorem*{definition*}{Definition}
\newtheorem{definition}{Definition}
\newtheorem*{remark*}{Remark}
\def\EC{\mathcal{E}}
\def\CP{\mathcal{P}}
\def\go{{G\setminus\{o\}}}
\def\ho{{H\setminus\{o\}}}
\def\gcal{{\mathcal{G}}}
\def\afs#1#2{\href{#1}{\nolinkurl{#2}}}
\def\afs#1#2{\burlalt{#1}{#2}}
\title[Power-law decay of weights in the 2D VRJP]{Power-law decay of weights and recurrence of the two-dimensional VRJP}
\author{Gady Kozma}
\address{Gady Kozma\hfill\break
	Department of Mathematics and Computer Science\\
	The Weizmann Institute of Science\\
	Rehovot, 76100, Israel.}
\email{gady.kozma@weizmann.ac.il}
\author{Ron Peled}
\address{Ron Peled\hfill\break
	School of Mathematical Sciences\\
	Tel Aviv University\\
	Tel Aviv, 69978, Israel.}
\email{peledron@tauex.tau.ac.il}
\urladdr{http://www.math.tau.ac.il/~peledron}
\begin{document}
\begin{abstract}
  The vertex-reinforced jump process (VRJP) is a form of self-interacting random walk in which the walker is biased towards returning to previously visited vertices with the bias depending linearly on the local time at these vertices. We prove that, for any initial bias, the weights sampled from the magic formula on a two-dimensional graph decay at least at a power-law rate. Via arguments of Sabot and Zeng, the result implies that the VRJP is recurrent in two dimensions for any initial bias.
\end{abstract}

\maketitle

\section{Introduction}
In this paper we study an interacting stochastic process known as
the \emph{vertex-reinforced jump process} (VRJP for short) in two
dimensions using a technique known as the \emph{Mermin--Wagner theorem}.
We start by describing VRJP, our object of study.

\subsection{The vertex-reinforced jump process}
VRJP was first studied in \cite{DV02} as a continuous-time version
of \emph{linearly edge reinforced random walk} (LRRW), a process studied
earlier by Diaconis and Coppersmith (unpublished, 1987) who noted
that it has an interesting property not shared by other reinforced
random walks: partial exchangeability. Partial exchangeability for
a discrete-time process means that the probability of any particular
path depends only on the number of times each edge was crossed, and
not on the order in which this happened. This property allows, via
a soft argument \cite{DF80}, to conclude that LRRW is in fact a random walk in
random environment (RWRE) and using a more elaborate argument
to get a formula for the distribution of the environment, known fondly
as ``the magic formula''. See \cite{MOR08} for the history of the
magic formula. For VRJP the picture is slightly different. The process
is not a (continuous time) RWRE
as stated. Instead, the process becomes a RWRE after a \emph{time change} and then has its own magic formula. A hint of the magic
formula for VRJP appeared in \cite{DV04} but the full picture was
only revealed by Sabot and Tarr\`es \cite{ST15}. The magic formula
will be stated exactly below, in \S\ref{subsec:Exact}.

A second special property of VRJP is the connection to \emph{supersymmetry}.
We will not attempt to describe supersymmetry in details in this short
introduction, but roughly it postulates a symmetry between \emph{fermions}
and \emph{bosons}. The specific supersymmetric model relevant to VRJP
is the \emph{hyperbolic sigma model}, defined by Zirnbauer~\cite{Zir91, DFZ92} (see also~\cite{DSZ10, DS10}). The hyperbolic sigma model has two fermions and two
bosons at each vertex, with an interaction that enjoys a hyperbolic symmetry. Integrating both fermionic fields and one of the
bosonic fields leads to a single field, let us denote it by $u$,
but with a complicated interaction term. It was discovered in \cite{ST15} that
$e^{u}$ has exactly the same distribution as the environment described
by the magic formula for the VRJP, establishing a link between these
two topics. Supersymmetry brings a new set of tools to the problem,
but the most relevant to us is the \emph{Ward identity.} It states
that $\mathbb{E}e^{u_{x}}=1$ always.

In this paper we study the VRJP in two dimensions. We show that the
environment of its RWRE representation decays at least like a power law, namely $\E e^{\frac{1}{2}u_x}\le|x|^{-c}$
where the constant $c$ may depend on the initial weight $a$ (see
exact definitions below). For LRRW this was proved by Merkl and Rolles
\cite{MR08}. This result is not sharp for small $a$. In this case
it was known \cite{ST15, ACK14} that in fact $u$ decays exponentially.
The true decay rate for large $a$ thus remains open. We do not know
if it is really a power law (and hence a transition of \emph{Kosterlitz--Thouless type} occurs) or rather if the decay is exponential for all
$a$. This is related to the question of \emph{asymptotic freedom}
in quantum field theory, but this introduction is too short to cover
these connections.

While this paper was written (which, unfortunately, took much too much time), Sabot gave an alternative proof of this result, see \cite{S19} (see also~\cite{DMR14} for the quasi one-dimensional case).

\subsection{Exact definitions and statements}\label{subsec:Exact}

\begin{definition}\label{def:VRJP}
Let $G$ be a finite graph, let $o$ be a vertex of $G$ and let $W:E(G)\to[0,\infty)$ be a function.
The \emph{vertex-reinforced jump process} (VRJP) on $G$ with initial vertex $o$ and weights $W$ is a continuous-time process $(Y_{t})_{t\ge 0}$ on the vertices of $G$ defined as follows: $Y_{0}:=o$
and at every $t>0$, $Y$ jumps from its current position $x$ to a neighbour
$y$ with rate $W_{xy}L_{y}(t)$ where $L$ is the local time:
\begin{equation}\label{eq:local time def}
L_{y}(t):=1+\int_{0}^{t}\mathbbm{1}\{Y_{s}=y\}\,ds.
\end{equation}
The \emph{time-changed VRJP} on $G$ with initial vertex $o$ and weights $W$ is the process $(Z_s)_{s\ge 0}$ obtained by setting $Z_s = Y_{D^{-1}(s)}$ where $D:[0,\infty)\to[0,\infty)$ is the (random) increasing bijection
\begin{equation}\label{eq:half}
  D(t) := \sum_{x\in G} (L_x^2(t) - 1).
\end{equation}
\end{definition}
We remark at this point that the time change is not as bad as it looks on first sight: it applies at each vertex essentially independently, a fact that we use below in the proof of Lemma \ref{lem:VRJP}.

\begin{convention}The $u$ in the theorem below, and more generally any function defined on $\go$ is considered to be zero on $o$.
\end{convention}

\begin{theorem}[``The magic formula'']\label{thm:magic}
Let $G$ be a finite connected graph, let $o$ be a vertex of~$G$ and let $W:E(G)\to(0,\infty)$.
\begin{enumerate}
  \item The function $\rho:\mathbb{R}^{\go}\to\mathbb{R}$ below is a probability density function:
    \begin{equation}
    \begin{split}
    \rho(u)&:=\frac{1}{(2\pi)^{(|G|-1)/2}}\exp\Big(-\sum_{x\in G}u_x\Big)\\
    &\qquad\exp\Big(-\sum_{\smash{\{x,y\}\in E(G)}}W_{xy}(\cosh(u_x-u_y)-1)\Big)
      \sqrt{D(W,u)}\label{eq:density rho}
    \end{split}
    \end{equation}
    where $D(W,u)$ is any diagonal minor of the matrix $A=(a_{xy}:x,y\in G)$
    given by
    \begin{equation}\label{eq:A matrix}
    a_{xy}:=\begin{cases}
    -W_{xy}e^{u_x+u_y} & x\ne y, \{x,y\}\in E(G)\\
    0&x\ne y, \{x,y\}\notin E(G)\\
    -\sum_{z\neq x}a_{xz} & x=y.
    \end{cases}
    \end{equation}
    (Note that the convention $u_o=0$ was used in the sums in~\eqref{eq:density rho}.)
  \item Sample $u$ randomly from the density $\rho$. Let $(Z_{s})_{s\ge 0}$ be a continuous-time random walk on $G$, with $Z_0 := o$, which transitions from $x$ to a neighbour $y$ with rate $\frac{1}{2}W_{xy}e^{u_y-u_x}$. Then $(Z_{s})$ (considered after averaging over the randomness in $u$) is distributed as the time-changed VRJP on $G$ with initial vertex $o$ and weights $W$.
\end{enumerate}
\end{theorem}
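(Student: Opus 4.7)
The plan is to split the theorem into its two parts and treat them by different techniques: an integration argument based on the matrix-tree theorem for part~(1), and a partial-exchangeability argument in the style of Diaconis--Freedman for part~(2).

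For part~(1), the first observation is that the matrix $A$ from~\eqref{eq:A matrix} is, after conjugation by $\operatorname{diag}(e^{u_x})$, the weighted graph Laplacian on $G$ with edge weights $W_{xy}e^{u_x+u_y}$. By the matrix-tree theorem every diagonal minor of $A$ therefore equals
\begin{equation*}
D(W,u) \;=\; \sum_{T} \prod_{\{x,y\}\in T} W_{xy}\, e^{u_x+u_y},
\end{equation*}
where the sum is over spanning trees $T$ of $G$; this simultaneously shows that $D(W,u)\ge 0$ and that the choice of minor is immaterial, justifying the wording of the statement. To prove $\int\rho = 1$ I would induct on $|G|$, picking a vertex $v\neq o$ and integrating out $u_v$ first. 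The $u_v$-dependence of $\rho$ comes from the factor $e^{-u_v}$, from the $\cosh(u_v-u_w)$ terms over the edges $\{v,w\}\in E(G)$, and from $\sqrt{D(W,u)}$, in which $u_v$ enters as a quadratic in $e^{u_v}$. The main obstacle is that the square root prevents one from integrating $u_v$ in closed form edge by edge; to get around this I would linearise by passing to a Grassmann representation, writing $\sqrt{D(W,u)}$ as a Berezin integral over one fermion pair per vertex. After that the $v$-integrand becomes local at $v$, can be computed explicitly in closed form, and a supersymmetric cancellation (a Ward identity of the kind mentioned in the introduction) reduces the induction step to a short check that the result is of the same form on the graph obtained by deleting $v$, with explicitly modified weights.

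For part~(2), I would use partial exchangeability of the embedded jump process. Representing the holding times of the VRJP as minima of independent exponentials in the usual way shows that the probability of any finite path of the embedded chain depends only on the directed edge-traversal counts and not on their order, so Diaconis--Freedman~\cite{DF80} applies and the embedded chain started at $o$ is a mixture of Markov chains with a random stochastic matrix $(p_{xy})$. Reintroducing the holding times and applying the time change~\eqref{eq:half} lifts this to the statement that $(Z_s)$, conditional on a random environment $(r_{xy})$, is a continuous-time Markov chain with rates $\tfrac{1}{2} r_{xy}$. The remaining, and hardest, step is to identify the law of $(r_{xy})$ explicitly. My plan is to guess the parametrisation $r_{xy}=W_{xy}e^{u_y-u_x}$ (this guess is motivated by the hyperbolic sigma model~\cite{SZ04} connection recalled in the introduction) and then to verify it by matching Laplace transforms of a sufficiently rich family of marginals of the VRJP against those computable from $\rho$; the normalisation from part~(1) is precisely what allows such moment computations to close. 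This last identification is where the real content of the magic formula lies, and I expect it to be by far the most delicate part of the argument.
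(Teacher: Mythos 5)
The paper does not actually prove Theorem~\ref{thm:magic}: it cites \cite[Theorem 2]{ST15} and merely records the dictionary between conventions ($2\sinh^2(\tfrac12(u_x-u_y))$ versus $\cosh(u_x-u_y)-1$, and the gauge $\sum_i u_i=0$ versus $u_o=0$). So there is no in-paper proof to compare against; the intended answer is a citation, and your proposal is instead an attempt to reconstruct the Sabot--Tarr\`es argument from scratch.

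On the merits of that attempt: part~(2) has a genuine gap. You assert that the embedded discrete jump chain of the VRJP is partially exchangeable, so that Diaconis--Freedman~\cite{DF80} applies. This is not a free observation for VRJP. The transition probability out of $x$ depends on the local times $L_y$, and the local times depend on the holding times, which must be integrated out; after that integration the path weight does not visibly reduce to a function of the directed edge counts alone. Indeed the introduction of the paper stresses exactly this point: the continuous-time VRJP ``is not a (continuous time) RWRE as stated'' and becomes a mixture of Markov jump processes only after the time change~\eqref{eq:half}---and establishing that fact \emph{is} the theorem, not a preliminary. Sabot and Tarr\`es do not argue via de~Finetti; they compute the explicit joint density of the local-time vector $(L_x(t))_x$ and position at a fixed deterministic $t$ and take $t\to\infty$, and~\eqref{eq:density rho} emerges from that limit. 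Your final step---``guess $r_{xy}=W_{xy}e^{u_y-u_x}$ and verify by matching Laplace transforms''---is where essentially all of the content of the magic formula lives, so it cannot be left as a verification.

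For part~(1), using the matrix-tree theorem to show $D(W,u)\ge0$ and independence of the choice of minor is fine, and the idea of representing $\sqrt{D(W,u)}$ by a Grassmann integral and invoking a supersymmetric Ward identity/localization is in the spirit of the literature (Spencer--Zirnbauer~\cite{SZ04}, Disertori--Merkl--Rolles~\cite{DMR17}); the paper's own Theorem~\ref{thm:Ward identity} comes from exactly this circle of ideas. However, the proposed induction on $|G|$ by integrating out a single vertex glosses over the fact that doing so in a sigma-model measure produces nonlocal effective interactions among the remaining vertices; the claim that this returns to a measure of the same form ``with explicitly modified weights'' is not a short check and would need to be proved. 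The usual route avoids the induction: one observes that $\rho$ is the marginal (over the fermionic fields and one bosonic field) of the full supersymmetric measure, whose total integral is $1$ by localization, so no recursion on $|G|$ is required.
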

See Sabot and Tarr\`es \cite[Theorem 2]{ST15}. The result there
is stated for $2\sinh^{2}(\frac{1}{2}(u_x-u_y)$ instead of for $\cosh(u_x-u_y)-1$
but this is of course the same. It is stated under the condition
$\sum u_{i}=0$ whereas our normalisation is $u_o=0$ but, again,
this is the same: the measure on $\{\sum u_{i}=0\}$ used in \cite{ST15}
is not the volume measure but simply the measure one gets by fixing
$u_{x}=0$ for an arbitrary vertex $x$ (see the comment immediately
after \cite[Theorem 2]{ST15}); and the $\rho$ is unchanged, except
the term $e^{u_o}$ in \cite{ST15} which becomes our $\exp(-\sum u_x)$.

The result of this paper is that in a two-dimensional graph the weights decay at least at a power-law rate. The result is local, only the structure at the vicinity of the point of interest is used. Here is the exact formulation.

Here and below, for vertices $x,y$ in a graph write $d(x,y)$ for their graph distance and, for integer $L\ge 0$, denote the closed ball of radius $L$ around $x$ by $B(x,L):=\{y\colon d(x,y)\le L\}$. We denote by $c$ and $C$ positive absolute constants whose value might change from line to line or even within the same line. We use $c$ for constants which are ``small enough'' and $C$ for constants which are ``large enough''. We use $c(\dotsb)$ and $C(\dotsb)$ for constants that depend on some parameters.
\begin{theorem}\label{thm:main}
  There exists $C,c(a)>0$ such the following holds for each $a>0$: Let $L\in\N$. Let $G$ be a finite connected graph with a distinguished vertex $o$ and assume that $B(o,L)$ is isomorphic to the ball $B((0,0),L)$ in $\Z^2$. Let $W:E(G)\to(0,\infty)$ satisfy that $W|_{B(o,L)}\equiv a$. Let $u$ be sampled from the density \eqref{eq:density rho} with respect to $G$ and $W$. Then for every $x\in B(o,2L)$,
  \begin{equation*}
    \P(u_x \ge -c(a)\log(d(o,x)))\le \frac{C}{d(o,x)^{c(a)}},
  \end{equation*}
  and
  \begin{equation}\label{eq:ehalfux}
    \E\left(e^{\frac{1}{2}u_x}\right) \le \frac{C}{d(o,x)^{c(a)}}.
  \end{equation}
Further, for every $a_0>0$ there exists $c(a_0)>0$ such that $c(a)\ge c(a_0)/a$ for $a>a_0$.
\end{theorem}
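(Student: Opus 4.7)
The plan is a Mermin--Wagner-style change of variables combined with the Ward identity $\E[e^{u_x}] = 1$. Fix a nonnegative test function $\phi:V(G)\to[0,\infty)$ with $\phi_o = 0$, and write $R_\phi(u) := \rho(u-\phi)/\rho(u)$. A direct change of variables in the integral defining $\E[e^{u_x/2}]$ gives the identity $\E[e^{u_x/2}R_\phi(u)] = e^{\phi_x/2}\E[e^{u_x/2}]$, and Cauchy--Schwarz applied to the left side, combined with the Ward identity, yields
\begin{equation*}
\E\bigl[e^{u_x/2}\bigr] \le e^{-\phi_x/2}\sqrt{\E[R_\phi(u)^2]}.
\end{equation*}
Thus \eqref{eq:ehalfux} reduces to exhibiting $\phi$ with $\phi_x \ge c(a)\log d(o,x)$ and $\E[R_\phi(u)^2] = O(1)$, where the constant $c(a)$ should behave like $1/a$ for large $a$; the tail bound on $\P(u_x\ge-c(a)\log d(o,x))$ then follows from \eqref{eq:ehalfux} by Markov's inequality applied to $e^{u_x/2}$.

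From \eqref{eq:density rho} one computes
\begin{multline*}
R_\phi(u) = \exp\!\Bigl(\sum_y\phi_y\Bigr)\cdot\exp\!\Bigl(-\!\!\!\sum_{\{y,z\}\in E(G)}\!\!W_{yz}\bigl[\cosh(u_y - u_z - (\phi_y - \phi_z)) - \cosh(u_y - u_z)\bigr]\Bigr)\\
\times\sqrt{\frac{D(W, u - \phi)}{D(W, u)}}.
\end{multline*}
The natural choice of $\phi$ is the discrete harmonic extension on $B(o,L)$ with boundary values $0$ on $\{o\}\cup\partial B(o,L)$ and $\phi_x = c(a)\log d(o,x)$ at the point of interest. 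In two dimensions such $\phi$ has Dirichlet energy $\sum(\phi_y-\phi_z)^2$ of order $c(a)^2(\log d(o,x))^2/\log L$, which can be driven to zero by taking $L\gg d(o,x)$. Expanding the cosh-difference to quadratic order in $\phi_y-\phi_z$ produces a Gaussian-type cost $\tfrac12\sum W_{yz}\cosh(u_y-u_z)(\phi_y-\phi_z)^2$, of order $a$ times the Dirichlet energy of $\phi$ once one has some a priori control on $\cosh(u_y-u_z)$; this is what drives the $c(a)\sim 1/a$ scaling.

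The main obstacle will be the linear factor $\exp(\sum_y\phi_y)$. For any such $\phi$, $\sum\phi_y$ is of order $L^2\log d(o,x)/\log L$, so naively this factor contributes an exponentially large term to $\E[R_\phi(u)^2]$ that completely swamps the Dirichlet gain. This is precisely the non-shift-invariant part of \eqref{eq:density rho}, and its cancellation is the crux of the argument. The resolution should exploit the determinant factor via the matrix--tree expansion $D(W,u) = \sum_T\prod_{\{y,z\}\in T}W_{yz}e^{u_y+u_z}$: the shift multiplies each tree weight by $\exp(-\sum_y\deg_T(y)\phi_y)$, and for a spanning tree of a bulk region of $\Z^2$ the vertex degrees are close to $2$, giving $\sum_y\deg_T(y)\phi_y\approx 2\sum_y\phi_y$. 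Consequently the determinant ratio contributes a factor of order $\exp(-2\sum_y\phi_y)$ to $\E[R_\phi(u)^2]$, cancelling the problematic linear factor up to a Dirichlet-energy-sized remainder. This cancellation is a manifestation of the same hyperbolic symmetry of~\eqref{eq:density rho} responsible for the Ward identity, and making it quantitative---simultaneously controlling the cosh fluctuations, the linear term, and the determinant ratio---is the technical heart of the proof.
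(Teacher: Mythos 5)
Your high-level plan (a Mermin--Wagner change-of-variables combined with the Ward identity) is indeed the paper's strategy, and you correctly identify the linear factor $\exp(\sum_y\phi_y)$ as the central obstruction. But your proposed resolution of that obstruction has a genuine gap. You want the determinant ratio to produce an $\exp(-2\sum_y\phi_y)$ cancellation via the matrix--tree expansion, on the grounds that ``for a spanning tree of a bulk region of $\Z^2$ the vertex degrees are close to $2$.'' This is not a provable statement: only the \emph{average} degree of a spanning tree is close to $2$; individual degrees fluctuate a great deal (trees have many leaves and many vertices of degree $3$ or $4$), so $\sum_y(\deg_T(y)-2)\phi_y$ is typically large, and after exponentiating and taking a $u$-dependent weighted average over $T$ there is no mechanism that forces concentration near $\exp(-2\sum\phi_y)$. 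Moreover, log-convexity of $D$ --- the property the paper actually establishes in Lemma~\ref{lem:log convexity} --- gives a \emph{lower} bound on a geometric mean, not the \emph{upper} bound on $D(W,u-\phi)/D(W,u)$ that your estimate of $\E[R_\phi(u)^2]$ requires. The paper sidesteps all of this by using a \emph{symmetric two-sided} perturbation $u^\pm$ with $\tfrac12(u^++u^-)=u$ and lower-bounding $\sqrt{\rho(u^+)\rho(u^-)}$ against $\rho(u)$: the log-linear factor $e^{-\sum u}$ then cancels \emph{exactly} in the geometric mean, $\sqrt{e^{-\sum u^+}e^{-\sum u^-}}=e^{-\sum u}$, while log-convexity of $\sqrt{D}$ acts in the favorable direction. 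With a one-sided shift $u\mapsto u-\phi$ neither of these helps, and I do not see how to make the cancellation you describe quantitative.

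There is a second gap in the phrase ``once one has some a priori control on $\cosh(u_y-u_z)$.'' Since $\cosh$ has unbounded second derivative, the quadratic term in your Taylor expansion carries the uncontrolled prefactor $\cosh(u_y-u_z)$, and this cannot be bounded uniformly over $u$. The paper's fix is not a crude a priori bound, but the addition algorithm of \S\ref{sec:the addition algorithm}: the perturbation is engineered to \emph{preserve} the gradients of $u$ on edges where $|u_y-u_z|\ge 2K$, so that those edges contribute nothing to the cost, while still adding approximately $\tau$ overall. Making this work requires showing that the set of large-gradient edges is sparse in a percolation sense, which is the entire content of \S\ref{sec:percolation} (using the VRJP and RWRE pictures jointly). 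Your outline contains no substitute for this input. A smaller point: you propose to drive the Dirichlet energy of $\phi$ to zero by taking $L\gg d(o,x)$, but the theorem must cover $x\in B(o,2L)$, so $L$ is not free; the paper instead balances a Dirichlet cost of order $a\lambda^2\log d(o,x)$ against a Ward-identity gain of order $\lambda\log d(o,x)$ by choosing $\lambda\sim c(a_0)/a$.
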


Part of the motivation for proving a local result comes from its application in Sabot--Zeng \cite{SZ15}. They proved recurrence of two-dimensional VRJP, conditioned on Theorem \ref{thm:main} in the specific case of wired boundary conditions. Here is the exact formulation. For an integer $L\ge 1$ let $G_L$ be the graph with vertex set $\{-L,\dotsc,L\}^2\cup\{\delta_L\}$, with vertices in $\{-L,\dotsc,L\}^2$ connected by an edge if they differ by exactly one in exactly one coordinate and with $\delta_L$ adjacent to every vertex $(x_1, x_2)$ for which either $|x_1|=L$ or $|x_2|=L$ (or both). Correspondingly, for a real $a>0$, $W_L:E(G_L)\to \R$ satisfies $W_L(e)=a$ for all edges except the edges connecting $\delta_L$ with $(x_1,x_2)$ having both $|x_1|=|x_2|=L$ for which $W_L(e)=2a$ (as these edges result from two edges of $\Z^2$ when identifying the vertices of $\Z^2$ adjacent to $V_L$ into the single vertex $\delta_L$). Let $o = (0,0)\in G_L$. Then Remark 7 in \cite{SZ15} says that if \eqref{eq:ehalfux} holds for this graph then VRJP on the whole of $\Z^2$ is recurrent. Since this falls under our Theorem \ref{thm:main} this gives a proof of recurrence of VRJP. We remark that a proof of a weaker notion of recurrence was given recently in \cite{BHS18}.

\subsection{Overview of the proof}
The core of the proof is an argument of Mermin--Wagner type, so let us start with a short discussion of this approach. For physicists, the Mermin--Wagner theorem states that continuous symmetries cannot be spontaneously broken in a system with short-range interactions in dimension $2$ or lower (see e.g.\ \cite[p. 198]{S06}).
Every use of the Mermin--Wagner approach starts with a perturbation
argument: a calculation (usually easy to do) shows that it is possible
to take one instance of the field $u$ and then deform it so that
the local deformation is small, small enough to have low energetic cost, while the overall deformation is significant.
For example, take the field $u$ to have density $\exp\big(-\sum (\nabla u)^2\big)$, i.e.\ a two-dimensional lattice Gaussian free field. The continuous group of symmetries in this case is simply the symmetries taking $u_x\mapsto u_x+C$ for some constant $C$, which preserves the density. Consider $u$ in the discrete box $\{-L,-L+1,\ldots,L\}^2$, normalized so that $u_{(0,0)}=0$. The perturbation argument entails comparing $u_{x}$
to $u_{x}^{\pm}:=u_{x}\pm\tau$, with $\tau$ chosen, e.g., as $\log(|x|+1)/\sqrt{\log L}$. The energy of $u$ is necessarily close to either that of $u^+$ or that of $u^-$ (since $(\nabla u)^2 - \frac{1}{2}((\nabla u^+)^2 + (\nabla u^-)^2) = (\nabla\tau)^2$ and the sum of the last term is uniformly bounded in $L$ by the choice of $\tau$) but overall the fields diverge by $\sqrt{\log L}$, which is
significant. One concludes that the fluctuations of $u$ must grow without bound as $L$ increases (specifically, $\var u_x\ge c\log L$ at vertices at distance $L$ from the origin).
There are multiple approaches to harness
the perturbation argument; the reader may find a discussion with references in \cite[page 4]{MP15}.

Let us now describe how to apply the above approach to the VRJP model. Recall that the density of $u$~\eqref{eq:density rho} (``the magic formula'') is proportional to
\begin{equation*}
  \exp\left(-\sum_{\smash{\{x,y\}\in E(G)}}W_{xy}U(u_x - u_y) + F(u)\right)
\end{equation*}
with
\begin{equation*}
\begin{split}
  U(s)&:=\cosh(s)-1,\\
  F(u)&:=-\sum_{x\in G}u_x + \frac{1}{2}\log(D(W,u)).
\end{split}
\end{equation*}
The idea is to use an argument of Mermin--Wagner type to lower bound the fluctuations of $u$ by (a constant multiple of) the fluctuations of the Gaussian free field, normalized to be zero at $o$, whose density is proportional to
\begin{equation*}
  \exp\left(-\sum_{\smash{\{x,y\}\in E(G)}}W_{xy}(u_x - u_y)^2\right).
\end{equation*}
On a two-dimensional graph, with $W_{xy}\equiv a$, this yields that
\begin{equation}\label{eq:variance lower bound}
  \var(u_x) \ge \frac{c}{a}\log(d(o,x))
\end{equation}
and corresponding Gaussian lower bounds on the tail behavior.
As a separate input, we use that the field $u$ is known not to be too big. Specifically, the Ward identity of Theorem~\ref{thm:Ward identity} shows that
\begin{equation}\label{eq:u not too big}
  \E(\exp(u_x))=1.
\end{equation}
In order to avoid a contradiction between~\eqref{eq:variance lower bound} (and the corresponding tail bounds) and~\eqref{eq:u not too big}, the value of $u_x$ must typically be small. For a quantitative inequality, recall that if $Z$ is a Gaussian random variable with mean $m$ and variance $\sigma^2$ then
\begin{equation*}
  \E(e^Z) = e^{m + \frac{1}{2}\sigma^2}.
\end{equation*}
Thus if $u_x$ were Gaussian then~\eqref{eq:variance lower bound} and~\eqref{eq:u not too big} would imply that
\begin{equation*}
  \E(u_x) \le -\frac{c'}{a}\log(d(o,x))
\end{equation*}
Theorem~\ref{thm:main} states quantitative results of this flavor.

In a different context, the idea that fluctuation lower bounds plus an a priori input that the field is not large could be used to prove that the field is typically small was used by Schenker in~\cite{S09} following a suggestion of Aizenman. A version of the Mermin--Wagner method was also applied by Merkl and Rolles~\cite{MR08} in proving power-law decay of the weights for the LRRW.

There are two main obstacles to the application of a Mermin--Wagner type argument to the VRJP model. First, the field $u$ does not have short-range interactions due to the presence of the determinant term. This is handled by noting that the function $F(u)$ above is \emph{log-convex} (Lemma~\ref{lem:log convexity}) and can thus be discarded in comparing the energy of $u$ with the energy of its perturbations $u^\pm$ (as $\sqrt{F(u+\tau)F(u-\tau)}\ge F(u)$ for any $\tau$). Second, the Mermin--Wagner method is easiest to implement for gradient fields whose interaction function $U$ is twice-continuously differentiable with $\sup U''<\infty$ (see, e.g., \cite[\S{} 1.1]{MP15} or~\cite[\S{} 2.6]{PS17}). As the hyperbolic cosine function does not satisfy this bound we need to resort to a more sophisticated version of the argument, based on ideas of Richthammer~\cite{R07} and developed in~\cite{MP15} (see the `addition algorithm' of \S~\ref{sec:the addition algorithm}). A price to pay is that an additional a priori input is required: We need to show that for some sufficiently large constant $K$, the random set of edges on which the gradient of $u$ exceeds $K$ in absolute value is sparse in an appropriate probabilistic sense. For other models, such an input was established either using reflection positivity~\cite{MP15} or using symmetries of the state space~\cite{CAP17}. Here, this input is proved by the approach of \cite{ACK14}, namely, by considering together the VRJP and RWRE pictures for the field $u$ (see \S~\ref{sec:percolation}). The obtained constant $K$ is uniform in the weight $a$ for $a$ bounded away from zero.

\section{Inputs on the weight distribution}
Let $G$ be a finite connected graph, $W:E(G)\to(0,\infty)$ and $o\in G$. We describe two inputs on the density~\eqref{eq:density rho} of the weight vector $u$.

The first input is known as a \emph{Ward identity}.
\begin{theorem}\label{thm:Ward identity}
If $u$ is sampled from the density~\eqref{eq:density rho} then $\mathbb{E}e^{u_x}=1$ for all $x\in G$.
\end{theorem}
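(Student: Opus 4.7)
The plan is to prove the Ward identity by a re-rooting change of variables: I will show that multiplying $\rho$ by $e^{u_x}$ and switching to coordinates in which $u_x = 0$ rather than $u_o = 0$ produces precisely the magic formula density for the same graph $G$ and weights $W$ but rooted at $x$. Since Theorem~\ref{thm:magic} does not privilege any particular vertex, this density integrates to $1$, and this gives $\mathbb{E}[e^{u_x}] = 1$ directly.

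Concretely, set $n := |G|$ and substitute $v_y := u_y - u_x$ for every $y \in G$, so that $v_x = 0$ and $v_o = -u_x$. The linear map $(u_y)_{y\ne o}\mapsto(v_y)_{y\ne x}$ (with $u_x$ recovered as $-v_o$) has Jacobian $\pm 1$, so $du = dv$. I then track each factor of $\rho$. The interaction $\cosh(u_y - u_z) - 1$ depends only on gradients and is invariant under the substitution. The identity $u_y + u_z = v_y + v_z + 2u_x$ gives $A(u) = e^{2u_x}\tilde A(v)$, where $\tilde A$ is the $v$-analogue of $A$; hence any $(n-1)\times(n-1)$ diagonal minor scales by $e^{2(n-1)u_x}$, so $\sqrt{D(W,u)} = e^{(n-1)u_x}\sqrt{D(W,v)}$. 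Finally, using $u_x = -v_o$ and $\sum_y u_y = \sum_y v_y - n v_o$, the remaining linear exponent becomes
\[
  u_x - \sum_{y\in G} u_y + (n-1)u_x \;=\; n u_x - \sum_{y\in G} u_y \;=\; -nv_o - \Bigl(\sum_{y\in G} v_y - nv_o\Bigr) \;=\; -\sum_{y\in G} v_y.
\]
Combining everything gives $e^{u_x}\rho(u)\,du = \tilde\rho(v)\,dv$, where $\tilde\rho$ is exactly the magic formula density of Theorem~\ref{thm:magic} applied to $(G, x, W)$, and integrating both sides yields the claim.

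I do not anticipate serious obstacles beyond this algebraic bookkeeping. The three root-dependent pieces of $\rho$ (the $e^{-\sum u_y}$ prefactor, the determinantal factor, and the choice of free coordinate) conspire so that re-rooting at $x$ produces exactly the factor $e^{u_x}$, and the cancellation of the $v_o$ terms in the linear exponent depends in an essential way on the specific coefficient $1$ in front of each $u_y$ in $\rho$. It is this cancellation, rather than any deeper supersymmetric input, that forces $e^{u_x}$ (and not some other function of $u_x$) to be the correct multiplier.
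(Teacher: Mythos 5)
Your proof is correct, and it takes a genuinely different route from the paper. The paper does not actually prove Theorem~\ref{thm:Ward identity}: it cites Disertori, Merkl and Rolles (formula (5.26) in \cite{DMR17}), where the identity is derived via the supersymmetric structure of the model (and the paper's remark only explains how to convert between the two parameterisations, integrating out the auxiliary field $s$). You instead give a direct, self-contained re-rooting argument: multiplying $\rho$ by $e^{u_x}$ and substituting $v_y := u_y - u_x$ converts the magic-formula density rooted at $o$ into the one rooted at $x$, which integrates to $1$ by part (1) of Theorem~\ref{thm:magic}. I checked your bookkeeping: the Jacobian of $(u_y)_{y\ne o}\mapsto(v_y)_{y\ne x}$ is $\pm 1$; the gradient terms are invariant; all entries of $A$ pick up a factor $e^{2u_x}$, so any $(|G|-1)\times(|G|-1)$ diagonal minor scales by $e^{2(|G|-1)u_x}$ and $\sqrt{D}$ by $e^{(|G|-1)u_x}$; and in the linear exponent the cancellation $u_x - \sum_y u_y + (|G|-1)u_x = -\sum_y v_y$ is exact. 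Your observation that the coefficient $1$ in $\exp(-\sum_y u_y)$ is exactly what makes $e^{u_x}$, and not some other power, the correct multiplier is the right way to understand why the argument works. The trade-off between the two approaches: the supersymmetric derivation explains the Ward identity as a manifestation of an underlying symmetry and generalises to related identities, while your argument is elementary and requires as input only the (nontrivial but standard) fact that the magic formula is correctly normalised for every choice of root.
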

See \cite{DMR17}, formula (5.26) (formula (5.19) in the arXiv version). The
model is defined slightly differently in \cite{DMR17}, there is an
extra function $s:\go\to\R$ and an extra term in the
density, $\exp(-\tfrac{1}{2}s^{t}As)$. Our model is the marginal distribution of $u$ in the model of \cite{DMR17}: integrating
the term $\exp(-\tfrac{1}{2}s^{t}As)$ gives a term $(2\pi)^{-(|G|-1)/2}D(W,u)^{-1/2}$
which, together with the term $D(W,u)$ in \cite{DMR17}, gives our
term $(2\pi)^{-(|G|-1)/2}\sqrt{D(W,u)}$.

The second input is a simple log-convexity property which will be key to the application of Mermin--Wagner type techniques in the proof of Theorem~\ref{thm:main}. This is well-known (see, e.g., Disertori--Spencer--Zirnbauer~\cite[Remark 2.3]{DSZ10} where the proof is attributed to David Brydges), but for completeness we provide a proof.

\begin{lemma}\label{lem:log convexity}
Let $A$ be the matrix given by~\eqref{eq:A matrix}. Let
$D(W,u)$ be the determinant of any diagonal minor of $A$. Then $\sqrt{D(W,u)}$ is a log-convex function of $u$.
\end{lemma}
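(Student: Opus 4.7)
The plan is to apply the matrix-tree theorem to rewrite $D(W,u)$ as a nonnegative combination of exponentials of linear functions of $u$, and then to invoke the classical convexity of the log-sum-exp function.

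First, observe that the matrix $A$ defined in~\eqref{eq:A matrix} is precisely the graph Laplacian of $G$ with nonnegative edge weights $\tilde W_{xy} := W_{xy} e^{u_x + u_y}$: its off-diagonal entries are $-\tilde W_{xy}$ on edges of $G$ and zero otherwise, and the diagonal entries are chosen so that every row sum vanishes. Since all row sums are zero, all diagonal principal cofactors of $A$ take the same value, and Kirchhoff's matrix-tree theorem gives
\begin{equation*}
D(W,u) \;=\; \sum_{T} \prod_{\{x,y\} \in T} W_{xy}\, e^{u_x + u_y} \;=\; \sum_{T} c_T\, \exp\bigl(\ell_T(u)\bigr),
\end{equation*}
where the sum runs over all spanning trees $T$ of $G$, the coefficients $c_T := \prod_{\{x,y\} \in T} W_{xy}$ are strictly positive, and $\ell_T(u) := \sum_{\{x,y\} \in T} (u_x + u_y)$ is a linear function of $u$.

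Second, the claim that $\sqrt{D(W,u)}$ is log-convex is the statement that $\tfrac{1}{2} \log D(W,u)$ is convex in $u$, and this is precisely the convexity of log-sum-exp. For $t \in [0,1]$ and $u, v \in \R^{\go}$, H\"older's inequality applied with the positive weights $c_T$ yields
\begin{equation*}
\sum_{T} c_T \exp\!\bigl(t\,\ell_T(u) + (1-t)\,\ell_T(v)\bigr) \;\le\; \Bigl(\sum_{T} c_T\, e^{\ell_T(u)}\Bigr)^{t} \Bigl(\sum_{T} c_T\, e^{\ell_T(v)}\Bigr)^{1-t}.
\end{equation*}
Taking logarithms, dividing by two, and recognizing $\ell_T$ as linear delivers the convexity of $\tfrac{1}{2}\log D(W,\cdot)$.

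I do not anticipate any substantive obstacle here. The first step is a direct bookkeeping application of a classical theorem (the only mild subtlety being the independence of $D(W,u)$ from the choice of diagonal minor, which follows from $A$ having vanishing row sums). The second step is the textbook convexity of $\log \sum_i c_i e^{\ell_i(\cdot)}$ for positive $c_i$ and linear $\ell_i$.
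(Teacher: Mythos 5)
Your proof is correct, and it shares the paper's first step — expanding $D(W,u)$ via the matrix-tree theorem into a nonnegative combination of exponentials of linear functions of $u$ — but then diverges. Where the paper establishes log-convexity by differentiating twice and exhibiting the Hessian of $\log D$ explicitly as a positive semi-definite quadratic form (after a symmetrization in $f$ and $g$), you instead observe that $\log D$ is a log-sum-exp of linear functions with positive weights and invoke H\"older's inequality, which is the textbook proof of convexity of log-sum-exp. The two routes prove the same thing; yours is arguably cleaner and more conceptual (it makes it evident that the only structure used is ``positive combination of exponentials of affine functions''), while the paper's Hessian computation is more self-contained and spells out the positive semi-definiteness by hand. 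A minor point of comparison: the paper inserts an intermediate rewriting $D=\sum_f a_f\prod_x e^{f(x)u_x}$ indexed by integer-valued functions $f$, which you correctly skip since your argument needs only that each summand is $c_T e^{\ell_T(u)}$ with $c_T>0$ and $\ell_T$ linear. Your H\"older step is applied correctly (split $c_T=c_T^t c_T^{1-t}$ and use exponents $1/t$, $1/(1-t)$), and your remark that the vanishing row sums make all diagonal cofactors of $A$ equal is the standard justification for the matrix-tree theorem applying uniformly to any diagonal minor.
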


\begin{proof}
Since $A$ is a Laplacian matrix, i.e.\ a symmteric matrix with nonpositive
off-diagonal entries and rows summing to 0, we may apply the matrix-tree
theorem \cite[theorem 1.19]{HMM08}. It gives
\[
D := D(W,u)=\sum_{T\in\mathcal{T}}\prod_{\{x,y\}\in E(T)}W_{xy}e^{u_{x}+u_{y}}
\]
where $\mathcal{T}$ is the set of all spanning trees $G$ (recall that a spanning tree of a graph is a subgraph which contains all vertices and some of the edges, and is connected and cycle-free).
Rearranging gives
\[
D=\sum_{f:G\to\mathbb{Z}}a_{f}\prod_{x\in G}e^{f(x)u_{x}}
\]
for some coefficients $a_{f}\ge0$ (all but finitely many of which are zero). Any such sum is log-convex: indeed, for each $x,y\in G$,
\begin{align*}
  D^{2}\frac{\partial^{2}}{\partial u_{x}\partial u_{y}}(\log D)
  & =\frac{\partial^{2}D}{\partial u_{x}\partial u_{y}}D
    -\frac{\partial D}{\partial u_{x}}\frac{\partial D}{\partial u_{y}}\\
  & =\sum_{f,g}a_{f}a_{g}(f(x)f(y)-f(x)g(y))\prod_{z}e^{(f(z)+g(z))u_{z}}
\end{align*}
and the symmetry between $f$ and $g$ allows to write the sum as
\begin{multline*}
  \frac{1}{2}\sum_{f,g}a_{f}a_{g}(f(x)f(y)-f(x)g(y)+g(x)g(y)-g(x)f(y))
    \prod_{z}e^{(f(z)+g(z))u_{z}}=\\
  \frac{1}{2}\sum_{f,g}a_{f}a_{g}(f(x)-g(x))(f(y)-g(y))\prod_{z}e^{(f(z)+g(z))u_{z}}.
\end{multline*}
To see that the resulting matrix is positive semi-definite, let $\mu$
be some test vector and write
\[
D^{2}\sum_{x,y}\mu_{x}\mu_{y}\frac{\partial^{2}}{\partial u_{x}\partial u_{y}}(\log D)
=\frac{1}{2}\sum_{f,g}a_{f}a_{g}\Big(\sum_{x}\mu_{x}(f(x)-g(x))\Big)^{2}
  \prod_{z}e^{(f(z)+g(z))u_{z}}.
\]
Since this is nonnegative, the lemma is proved.
\end{proof}

\section{Comparing to percolation}\label{sec:percolation}

Let $G$ be a finite connected graph, $W:E(G)\to(0,\infty)$ and $o\in G$. Let $a>0$ and let $H\subset G$ be some induced subgraph such that $W|_{E(H)}\equiv a$.
Let $u$ be sampled from the density~\eqref{eq:density rho}. In this section we show that the (random) set of edges $\{x,y\}$ where $|u_x - u_y|$ is large is sparse in a suitable sense.

A random set of edges is called an \emph{$\eps$-percolation} if each edge of the underlying graph is present in the random set with probability $\eps$, independently between different edges. Our proof entails consideration of two $\eps$-percolations, which may be dependent among themselves.

\begin{proposition}\label{prop:perco} For every $\eps>0$ there exists $K=K(\eps,a)$ such that
  the set $\{\{x,y\}\in E(H)\colon |u_x-u_y|\ge K\}$ is dominated by a union of two (dependent) $\eps$-percolations. Further, $\sup\{K(\eps,a)\colon a>a_0\}<\infty$ for each $a_0>0$.
\end{proposition}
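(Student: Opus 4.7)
The strategy, following \cite{ACK14}, is to exploit the correspondence (Theorem \ref{thm:magic}) between the field $u$ and the VRJP dynamics themselves. In the quenched RWRE description, the rate for crossing an edge $\{x,y\}$ in the direction $x\to y$ is $\frac{1}{2}W_{xy}e^{u_y-u_x}$, so $u_x-u_y$ is morally a log-ratio of crossing rates across the edge. Averaging in the magic-formula coupling, the same quantity is essentially a ratio of VRJP local times at $x$ and $y$, which can be analyzed directly on the VRJP side without ever touching the determinantal factor in $\rho$.

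\emph{Single-edge tail bound.} For a fixed edge $\{x,y\}\in E(H)$, I would first control $\P(|u_x-u_y|\ge K)$ by comparing the empirical rate of jumps across $\{x,y\}$ on the VRJP side with the RWRE rate on the $u$ side, both viewed along the natural time change $D$ of \eqref{eq:half}. The VRJP on a finite connected graph visits every vertex, and the increments $dL_y$ versus $dL_x$ are coupled via a locally computable martingale; a concentration estimate for this martingale should bound the probability that one local time outpaces the other by a factor of $e^K$ by something of the form $C(a)e^{-c(a)K}$. For $a$ bounded away from $0$, the constants can be taken uniform because the relevant estimates depend on $W|_{E(H)}\equiv a$ only through this fixed local rate.

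\emph{From single edges to percolation domination.} A tail bound for a single edge is not enough for stochastic domination; one needs to upgrade it to a bound on the conditional probability $\P(|u_x-u_y|\ge K\mid\F)$, where $\F$ is generated by the $u$-values away from the edge. Here the log-convexity of $\sqrt{D(W,u)}$ from Lemma \ref{lem:log convexity} plays the crucial role: after conditioning on the remaining $u$-values, the determinantal factor concentrates the conditional distribution of $(u_x,u_y)$ rather than spreading it, so the same tail bound survives under conditioning. With this conditional bound in hand, a standard stochastic domination lemma of Liggett--Schonmann--Stacey type shows that the random set of bad edges is dominated by a $k$-dependent $\eps$-percolation; splitting the edges of the underlying $\Z^2$-like structure into two well-separated subsets (for example, horizontal versus vertical) yields the claimed domination by a union of two (dependent) $\eps$-percolations.

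\emph{Main obstacle and uniformity in $a$.} The hard part is the transfer from the non-local, dynamical single-edge bound to the local conditional statement in the preceding paragraph. The log-convexity of the determinantal factor is doing all the work there; without it, conditioning could create effective long-range attractive interactions that would destroy the tail estimate. The claimed uniformity $K\le C(\eps,a_0)$ for $a\ge a_0$ then follows because increasing $a$ only sharpens the $\cosh$ penalty on large gradients, so the VRJP-side concentration input only improves as $a$ grows.
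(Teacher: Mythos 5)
Your high-level diagnosis is right --- the proof of Proposition~\ref{prop:perco} is the one place in the paper where the dynamical VRJP/RWRE couplings of~\cite{ACK14} are used, and the determinantal term plays no role there. But the concrete mechanism you propose is not what the paper does, and it has genuine gaps.

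The paper's proof is entirely trajectory-based and never touches the static density $\rho$. It introduces an estimator $Q_{xy}$, the local time spent by the time-changed walk $Z$ at $x$ before its first jump from $x$ to $y$, and decomposes $e^{u_x-u_y}=\big(e^{u_x-u_y}/Q_{xy}\big)\cdot Q_{xy}$, bounding each factor by a separate lemma. For the first factor (Lemma~\ref{lem:RWRE}), in the RWRE picture and conditionally on $u$, the variables $Q_{xy}\cdot\frac12 W_{xy}e^{u_y-u_x}$ are i.i.d.\ Exp$(1)$ across directed edges, so the bad event $\{e^{u_x-u_y}\ge K_1 Q_{xy}\}$ is literally an $\eps$-percolation with $\eps=a/2K_1$, even before integrating $u$. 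For the second factor (Lemma~\ref{lem:VRJP}), in the VRJP picture $Q_{xy}=q_{xy}^2+2q_{xy}$, where $q_{xy}$ is the pre-time-change waiting time at $x$ before the jump to $y$; since the instantaneous jump rate $W_{xy}L_y(t)\ge a$ always, the field $q_{xy}$ is dominated by i.i.d.\ Exp$(a)$. The ``union of two'' in the statement is exactly the union of the bad events from these two lemmas (with a directed-to-undirected factor of two absorbed into $\eps$). No concentration estimate, no martingale, no conditioning on $u$ away from the edge, and no Liggett--Schonmann--Stacey step are needed.

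The concrete gaps in your plan: (1) The ``locally computable martingale'' and its concentration estimate are not specified, and I do not see how such an estimate would give the exponential tail in $K$ uniformly in the rest of the environment; the paper's argument sidesteps this entirely by exhibiting an i.i.d.\ structure directly. (2) Your claim that Lemma~\ref{lem:log convexity} makes the conditional law of $(u_x,u_y)$ given the rest concentrate rather than spread is unjustified. Log-convexity of $\sqrt{D(W,u)}$ is a statement about the function globally, used later in the Mermin--Wagner step to discard the determinant via $\sqrt{F(u+\tau)F(u-\tau)}\ge F(u)$; it is not a tool for controlling conditional marginals, and the determinant genuinely creates long-range dependence that such a conditional bound would have to address. (3) Even granted a conditional single-edge bound, your horizontal-versus-vertical split plus LSS would produce a finitely-dependent dominating field with degraded constants, not a clean union of two $\eps$-percolations; the paper's ``two'' is not a lattice-geometry artifact but the two lemmas above. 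Finally, the uniformity $K\le C(\eps,a_0)$ in the paper does not come from the $\cosh$ penalty sharpening with $a$, since the $\cosh$ term is not used in this proposition at all; it comes from $K=\log(K_1K_2)$ with $K_1=a/2\eps$ and $K_2\le C(\eps)(a^{-1}+a^{-2})$.
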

Write $\vec E(H)$ for the set of directed edges of $H$. The proof revolves around an ``estimator'' for $\exp(u_x-u_y)$, $(x,y)\in \vec E(H)$, which we denote by $Q_{xy}$ (note that $Q$ is not symmetric). We define $Q$ via the $Z$ process; recall that it has two equivalent definitions, via the VRJP picture (see Definition~\ref{def:VRJP}) and via the RWRE picture (see Theorem \ref{thm:magic}).

Define $Q_{xy}$ to be the local time spent by $Z$ at $x$ up to the first jump from $x$ to $y$.

\begin{lemma} \label{lem:RWRE}For $\eps>0$ let $K_1=a/2\eps$. Then the set $\{(x,y)\in \vec E(H):\exp(u_x-u_y)\ge K_1 Q_{xy}\}$ is dominated by $\eps$-percolation.
\end{lemma}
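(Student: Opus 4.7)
My plan is to start from Theorem~\ref{thm:magic}(2): conditionally on $u$, the time-changed process $Z$ is a continuous-time Markov chain on $G$ with jump rates $r_{xy} := \tfrac{1}{2} W_{xy} e^{u_y - u_x}$. I would realize this Markov chain via the standard independent-clock construction. For each directed edge $(x,y) \in \vec E(G)$, sample an independent Poisson process $N_{xy}$ of rate $r_{xy}$, parameterized by the local time at $x$ (i.e., $N_{xy}$ advances only while $Z$ sits at $x$), and have the walker jump from $x$ to $y$ at each arrival of $N_{xy}$. This is essentially the realization alluded to in the remark following Definition~\ref{def:VRJP}, and standard CTMC theory confirms it has the correct law.

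With this coupling, $Q_{xy}$ equals the local time of the first arrival of $N_{xy}$: the Markov chain on the finite state space is recurrent under positive rates, so the local time at $x$ tends to infinity almost surely, and hence the first arrival of $N_{xy}$ is indeed consumed by the first $x \to y$ jump. Thus, conditionally on $u$, $Q_{xy} \sim \mathrm{Exp}(r_{xy})$ and the family $\{Q_{xy}\}_{(x,y) \in \vec E(H)}$ is \emph{independent}.

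Given this, the rest is immediate. Using $W_{xy} = a$ for $\{x,y\} \in E(H)$, the event $\exp(u_x - u_y) \ge K_1 Q_{xy}$ rewrites as
\begin{equation*}
  r_{xy} Q_{xy} \le \frac{W_{xy}}{2 K_1} = \frac{a}{2 K_1} = \varepsilon,
\end{equation*}
and since $r_{xy} Q_{xy} \sim \mathrm{Exp}(1)$ given $u$, this event has conditional probability $1 - e^{-\varepsilon} \le \varepsilon$. Conditional independence across directed edges in $\vec E(H)$ then yields a coupling of the random set of ``bad'' directed edges with an independent $\varepsilon$-percolation on $\vec E(H)$ that dominates it; because the dominating law does not depend on $u$, the domination passes to the unconditional distribution.

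The only nontrivial step is really the first one: recognizing that the conditional law of $Z$ given $u$ is a Markov chain and using an independent-clock realization to obtain independence of the $Q_{xy}$'s. Beyond that, the only thing to watch is the event that $Z$ never performs the first jump $x \to y$ (which has probability zero on a finite graph with positive rates, but would otherwise leave $Q_{xy}$ undefined); setting $Q_{xy} := +\infty$ there is harmless because the target inequality becomes vacuous and does not spoil the coupling.
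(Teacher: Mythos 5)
Your proof is correct and follows essentially the same route as the paper: conditionally on $u$, realize the Markov chain $Z$ via independent exponential clocks on directed edges so that the $Q_{xy}$'s are independent $\mathrm{Exp}(\tfrac12 W_{xy}e^{u_y-u_x})$ variables, and then observe the tail bound $\P(\mathrm{Exp}(1)\le\eps)\le\eps$. The paper compresses the clock construction into the word ``evidently''; you unpack it, but the substance is identical.
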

\begin{lemma}\label{lem:VRJP}
  For every $\eps>0$ there exists $K_2=K_2(\eps,a)$ such that the set $\{(x,y)\in \vec E(H):Q_{xy}\ge K_2\}$ is dominated by $\eps$-percolation. Further, one may assume that $K_2\le C(\eps)(a^{-1}+a^{-2})$.
\end{lemma}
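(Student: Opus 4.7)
The plan is to work with the VRJP $Y$ of Definition \ref{def:VRJP} rather than with the RWRE $Z$, exploiting the hint that the time change $D$ splits over vertices. I construct $Y$ from a jointly independent family of unit-rate Poisson processes $\{\Pi_{xy}\}_{(x,y)\in \vec E(G)}$, using the standard recipe in which the $j$-th firing of $\Pi_{xy}$ produces an $x\to y$ jump at the first VRJP time $T$ for which $\int_0^T W_{xy}L_y(s)\mathbbm{1}\{Y_s=x\}\,ds$ equals that firing. Let $\eta_{xy}$ denote the first firing of $\Pi_{xy}$; then $\{\eta_{xy}\}_{(x,y) \in \vec E(G)}$ is an i.i.d.\ family of $\operatorname{Exp}(1)$ variables. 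This is the only randomness I need to expose.

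Translating $Q_{xy}$ back to the VRJP picture is immediate: differentiating $D(t)=\sum_z(L_z^2(t)-1)$ gives $D'(t)=2L_{Y_t}(t)$, so
\[
  Q_{xy} \;=\; \int_0^{T^Y}\!2L_x(t)\,\mathbbm{1}\{Y_t=x\}\,dt \;=\; L_x(T^Y)^2 - 1,
\]
where $T^Y$ is the first $Y$-jump from $x$ to $y$. For $(x,y)\in \vec E(H)$ we have $W_{xy}=a$, and the defining relation for $T^Y$ combined with the trivial lower bound $L_y\ge 1$ gives
\[
  \eta_{xy} \;=\; \int_0^{T^Y}\!aL_y(s)\mathbbm{1}\{Y_s=x\}\,ds \;\ge\; a\bigl(L_x(T^Y)-1\bigr),
\]
hence the deterministic pointwise bound $Q_{xy} \le (1+\eta_{xy}/a)^2 - 1 = 2\eta_{xy}/a + \eta_{xy}^2/a^2$.

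It follows that $\{(x,y)\in \vec E(H):Q_{xy}\ge K_2\}$ is contained in $\{(x,y):\eta_{xy}\ge a(\sqrt{1+K_2}-1)\}$, and the latter is literally a Bernoulli percolation (independence coming from the i.i.d.\ $\eta_{xy}$) with per-edge probability $\exp(-a(\sqrt{1+K_2}-1))$. Setting this equal to $\eps$ gives $K_2 = 2a^{-1}\log(1/\eps) + a^{-2}\log^2(1/\eps)$, which is at most $C(\eps)(a^{-1}+a^{-2})$, establishing both domination by $\eps$-percolation and the quantitative bound. The only point requiring care is that the Poisson-clock construction actually produces a process with the law of the VRJP from Definition \ref{def:VRJP} and that the first firings of distinct $\Pi_{xy}$ are genuinely independent; both are standard thinning-type facts, relying precisely on the feature that between jumps the local times evolve deterministically, so no further randomness couples the different directed edges.
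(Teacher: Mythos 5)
Your proof is correct and follows the same strategy as the paper: relate $Q_{xy}$ to the raw occupation time $q_{xy}$ of the VRJP $Y$ via $Q_{xy}=(1+q_{xy})^2-1$, and dominate $q_{xy}$ by an independent field of rate-$a$ exponentials using that the instantaneous jump rate $W_{xy}L_y(t)\ge a$. Your derivation of $Q_{xy}=L_x(T^Y)^2-1$ by differentiating $D$ is a slightly slicker route to the paper's telescoping-sum computation, and the explicit Poisson-clock coupling simply spells out what the paper states as "dominated by a field of i.i.d.\ exponentials."
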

Proposition \ref{prop:perco} following immediately from these two lemmas, with $K_{\textrm{Proposition \ref{prop:perco}}}=\log(K_{1}K_{2})$ and with $\eps_{\textrm{Lemmas \ref{lem:RWRE} and \ref{lem:VRJP}}}=\frac12 \eps_{\textrm{Proposition \ref{prop:perco}}}$ (the $\frac12$ is needed because both lemmas give directed $\eps$-percolation, while the proposition is about undirected percolation, and projecting a directed $\eps$-percolation to an undirected one gives a $(2\eps-\eps^2)$-percolation). Let us therefore move to the proofs of these lemmas.
\begin{proof}[Proof of Lemma \ref{lem:RWRE}]
Examine the RWRE picture. We claim that conditioned on the environment $u$, $Q_{xy}\cdot(\frac12 W_{xy} \exp(u_y-u_x))$ is an i.i.d.\ field of exponential random variables of rate 1. This implies that the unconditioned field (after integrating over $u$) satisfies the same. The lemma follows, as an exponential random variable $T$ with rate $1$ satisfies $\P(T\le \eps)=\int_0^\eps e^{-x}dx\le \eps$.

To see the above claim we recall a method for implementing a continuous-time random walk. For each directed edge $(x,y)$, denote the jump rate from $x$ to $y$ by $\rho_{xy}$ and associate to $(x,y)$ an independent Poisson process with intensity $\rho_{xy}$. The walk is then defined by the rule that a jump from $x$ to $y$ occurs at times $t$ for which (i) the walker is at the vertex $x$ just before time $t$, and (ii) an event of the Poisson process of $(x,y)$ occurs at time $L_x(t)$, where $L_x(t)$ is the local time accumulated at the vertex $x$ by time $t$.
It is a standard fact that the walk defined in this way indeed has the correct distribution. With this representation, it becomes clear that if $Q_{xy}$ is the local time spent by the walk at $x$ up to its first jump to $y$ then the $(Q_{xy})$ are independent and each $Q_{xy}$ has an exponential distribution with rate $\rho_{xy}$.
\end{proof}
\begin{proof}[Proof of Lemma \ref{lem:VRJP}]
Examine the VRJP picture, and denote by $q_{xy}$ the time spent by $Y$ in $x$ up to the first jump from $x$ to $y$. The instantaneous jump rate from $x$ to $y$, $W_{xy}L_y(t)$, is always larger than $a$ (as $L_y\ge 1$, see~\eqref{eq:local time def}). Hence $q_{xy}$ is dominated by a field of i.i.d.\ exponential random variables with rate $a$. In particular,
\begin{equation}\label{eq:exponential of rate a}
  \P\left(q_{xy}>\frac{t}{a}\right)\le e^{-t}.
\end{equation}
We now claim that $Q_{xy}=q_{xy}^2+2q_{xy}$, which will then imply the lemma. Indeed, let $t_i$ be the $i^{\textrm{th}}$ time that $Y$ enters $x$, let $t'_i$ be the $i^{\textrm{th}}$ time $Y$ exits $x$, and let $k$ be the minimal $i$ for which $Y$ exits $x$ towards $y$ at time $t_i'$, so $q_{xy}=\sum_{i=1}^k (t_i'-t_i)$. The RWRE picture makes it clear that all the $t_i$ and $t_i'$, as well as $k$, are almost surely finite. Recall the time change function $D$ from \eqref{eq:half}, so $Q_{xy}=\sum_{i=1}^kD(t_i')-D(t_i)$. But between $t_i$ and $t_i'$ the sum defining $D$ changes only at $x$ so
  \[
  D(t_i')-D(t_i)=L_x^2(t_i')-L_x^2(t_i)=L_x^2(t_i')-L_x^2(t_{i-1}')
  \]
where we define $t_0'=0$. Hence
  \begin{align*}
  Q_{xy}&=\sum_{i=1}^kD(t_i')-D(t_i)=\sum_{i=1}^kL_x^2(t_i')-L_x^2(t_{i-1}')^2\\
  &=L_x^2(t_k')-L_x^2(0)=(1+q_{xy})^2-1=q_{xy}^2+2q_{xy}
  \end{align*}
as needed. The lemma follows.
\end{proof}

\section{The addition algorithm}\label{sec:the addition algorithm}

The final ingredient used in our proof is the following, so called \emph{addition algorithm}, which is introduced in~\cite{MP15} following earlier work of Richthammer~\cite{R07}.

The input to the addition algorithm is a finite, connected graph $H$, a function $\tau:H\to[0,\infty)$ and a constant $K$. Its output is two bijections $T^+, T^-$ on $\R^H$ such that $T^\pm(\varphi)$ is an approximation of $\varphi\pm \tau$, chosen in a way that preserves the gradients of $\varphi$ whenever the latter are larger than $K$. The exact formulation is below.

While the explicit description of the addition algorithm is not long or difficult (see~\cite[\S{} 2.2]{MP15}), we refrain from giving it here and instead list the properties of the algorithm which we require. The list follows the properties in~\cite[\S{} 2.1]{MP15}, with the exception of property \ref{property:addition_lower_bound} for which we provide the stronger statement given in~\cite[Proposition 2.7]{MP15}, and with a few differences in formulation which are explained following the list.

Let $H$ be a finite connected graph with a distinguished vertex $o$. We sometimes write $v\sim w$ to denote that $\{v,w\}\in E(H)$. Let $\tau:H\to[0,\infty)$, $\tau_o=0$ and
$K>0$ be given. The addition algorithm defines a pair of measurable mappings $T^+, T^-:\R^\ho\to\R^\ho$ related by the
equality
\begin{equation}\label{eq:T_+_T_-_relation}
  T^+(\varphi) - \varphi = \varphi -
  T^-(\varphi),\quad\varphi\in\R^{\ho},
\end{equation}
and satisfying the following properties:
\renewcommand{\theenumi}{{(\roman{enumi})}}
\renewcommand{\labelenumi}{(\roman{enumi})}
\begin{enumerate}
  \item (bijections) \label{property:one-to-one_onto}$T^+$ and $T^-$ are one-to-one and onto.
  \item (add at most $\tau$) \label{property:maximal_increment}For every $\varphi\in\R^{\ho}$ and every $v\in H$,
  \begin{equation}\label{eq:T^+_T^-_increment_range}
    0\le T^+(\varphi)_v - \varphi_v = \varphi_v - T^-(\varphi)_v\le \tau_v.
  \end{equation}
  \item (gradient preservation) \label{property:Lipschitz_preservation} For every $\varphi\in\R^{\ho}$ and every $(v,w)\in E(H)$,
  \begin{align*}
    &|\varphi_v - \varphi_w|\ge 2K\implies T^\pm(\varphi)_v -
    T^\pm(\varphi)_w = \varphi_v -
    \varphi_w,\\
    &|\varphi_v - \varphi_w|< 2K\implies |T^\pm(\varphi)_v -
    T^\pm(\varphi)_w| < 2K.
  \end{align*}
\end{enumerate}
The properties stated so far do not exclude the possibility that
$T^+$ is the identity mapping (implying the same for $T^-$ by
\eqref{eq:T_+_T_-_relation}). The next property shows that
$T^+(\varphi) - \varphi$ is close to $\tau$ under certain
restrictions on the set of edges on which $\varphi$ changes by at least $K$. We require a few definitions.

Recall that $d$ stands for graph distance, here on the graph $H$.
The next two definitions concern the Lipschitz properties of $\tau$.
\begin{align}
\tau'(v,k)&:=\max\{\tau_v - \tau_w\colon w\in H,\, d(v,w)\le
k\}\label{eq:tau_prime_def},\\
L(\tau,K)&:=\max\left\{k\colon d(v,w)< k\implies |\tau_v-\tau_w|\le \tfrac12 K\right\}\label{eq:L_tau_eps_def}
\end{align}
(the $'$ in $\tau'$ is supposed to remind the reader of differentiation).
In the following definitions we consider the connectivity properties
of the subset of edges on which $\varphi$ changes by more than
$K$. For $\varphi\in\R^H$ define
\begin{equation}\label{eq:EC_def}
  \EC(\varphi):=\{(v,w)\in E(H)\colon |\varphi_v-\varphi_w|\ge K\}
\end{equation}
and write, for a pair of vertices $v,w\in H$,
\begin{equation}\label{eq:EC_connectivity_def}
v\xleftrightarrow{\EC(\varphi)} w\;\text{ if $v$ is connected to $w$
by edges of $\EC(\varphi)$},
\end{equation}
where we mean in particular $v\xleftrightarrow{\EC(\varphi)} v$ for all $v\in H$. Let
\begin{align}\label{def:rAndm}
  r(\varphi, v)&:= \max\{d(v,w)\colon w\in H,\,
  v\xleftrightarrow{\EC(\varphi)}w\},\\
  M(\varphi)&:=\max\{d(v,w)\colon v,w\in H,\,v\xleftrightarrow{\EC(\varphi)}w\}.\label{eq:M phi def}
\end{align}
\begin{enumerate}
\setcounter{enumi}{3}
  \item (add close to $\tau$) \label{property:addition_lower_bound}
  For any $\varphi\in\R^\ho$ satisfying $M(\varphi)\le
  L(\tau,K) - 2$,
  \begin{equation*}
    \tau_v - \tau'(v,r(\varphi,v))\le T^+(\varphi)_v - \varphi_v\le \tau_v\quad\text{
    for all $v\in H$.}
  \end{equation*}
\end{enumerate}
Our final property regards the change of measure induced by the
mappings $T^+$ and $T^-$. We bound the Jacobians of these mappings
when the subgraph $\EC(\varphi)$ does not contain many large
connected components.

\begin{enumerate}
\setcounter{enumi}{4}
  \item (Jacobians) \label{property:Jacobian_estimate} There exist measurable functions $J^+,J^-:\R^\ho\to[0,\infty)$
      satisfying
  \begin{equation}\label{eq:Jacobian_formula_properties_section}
\int g(T^+(\varphi))J^+(\varphi)\, d\varphi = \int
g(T^-(\varphi))J^-(\varphi)\, d\varphi = \int g(\varphi)
\,d\varphi
\end{equation}
for every measurable $g:\R^{\ho}\to[0,\infty)$ (where $d\varphi$ stands for Lebesgue measure on $R^{\ho}$). These functions satisfy the estimate
  \begin{equation}\label{eq:Jacobian estimate}
    \sqrt{J^+(\varphi)J^-(\varphi)}\ge \exp\left(-\frac{1}{K^2}\sum_{v\in H} \tau'\left(v,1+\max_{w\sim
v}r(\varphi,w)\right)^2\right)
  \end{equation}
  at every $\varphi\in\R^\ho$ for which $M(\varphi)\le L(\tau,K)-2$.
\end{enumerate}
\renewcommand{\theenumi}{{\arabic{enumi}}}
\renewcommand{\labelenumi}{(\arabic{enumi})}

For easier comparison with \cite{MP15} let us explain the few differences between the way the result is formulated here and there.
\begin{enumerate}
\item In \cite{MP15} there is an additional parameter $\eps$. We set this $\eps$ to $\frac12$.
\item The parameter $K$ does not appear in \cite{MP15}. There, the constant $2K$ appearing in property~\ref{property:Lipschitz_preservation} is replaced by $1$.
The version here is achieved by dividing $\varphi$ and $\tau$ by $2K$, applying the addition algorithm of~\cite{MP15} and then multiplying back by $2K$.
  \item Our $L$ is defined slightly differently than in \cite{MP15}, with $L=L_{\textrm{\cite{MP15}}}+2$.
  \item In~\cite{MP15} there is no distinguished vertex $o$ on which the functions $\tau$ and $\varphi$ are assumed to be zero. In addition, the Jacobians are shown to satisfy a stronger property than~\eqref{eq:Jacobian_formula_properties_section}, allowing to fix the functions $\varphi$ to arbitrary values on vertices where $\tau$ is zero. Here, for simplicity, we restricted to the case that $\tau$ and $\varphi$ are fixed to zero at $o$ as this is the only case we will use.
\end{enumerate}

\section{Proof of the main result}\label{sec:proof of main theorem}
In this section we combine the previous ingredients to prove Theorem~\ref{thm:main}.

Let $L\in\N$. Let $G$ be a finite connected graph with a distinguished vertex $o$ and assume that $B(o,L)$ is isomorphic to the ball $B((0,0),L)$ in $\Z^2$. Let $W:E(G)\to(0,\infty)$ satisfy that $W|_{B(o,L)}\equiv a$. Let $u$ be sampled from the density \eqref{eq:density rho} with respect to $G$ and $W$. We need to show that there exist $C,c(a)>0$ so that for any $a>0$ and any $x\in B(o,2L)$,
\begin{align}
    &\P(u_x \ge -c(a)\log(d(o,x)))\le \frac{C}{d(o,x)^{c(a)}},\label{eq:u_x greater than u_0 prob}\\
    &\E\left(e^{\frac{1}{2}u_x}\right) \le \frac{C}{d(o,x)^{c(a)}}\label{eq:fractional exponential moment expect}
\end{align}
and that $c(a)$ can be taken to be at least $c(a_0)/a$ for all $a>a_0$. We assume throughout the following that $d(o,x)$ (and thus also $L$) is at least a large absolute constant as for each fixed $d(o,x)$ we may take $C$ large enough and $c(a_0)$ small enough to make~\eqref{eq:u_x greater than u_0 prob} trivial and make~\eqref{eq:fractional exponential moment expect} follow from the Ward identity (Theorem~\ref{thm:Ward identity}).

The following is our main lemma, which shows that $u_x$ must be either larger than $c(a)\log d(o,x)$ or smaller than $-c(a)\log d(o,x)$, with high probability. The theorem follows from it, see page \pageref{pg:proof}, by a simple application of the Ward identity (which is also used in the proof of the lemma).
\begin{lemma}\label{lem:u x u o deviation bound}
Let $a_0>0$. There exist $C, c(a_0)>0$ such that for every $a>a_0$,
  \begin{equation*}
    \P\left(|u_x|\le \frac{c(a_0)}{a}\log\Big(\frac{1}{4}\sqrt{d(o,x)}\Big)\right)\le C\, d(o,x)^{-c(a_0)/a}.
  \end{equation*}
\end{lemma}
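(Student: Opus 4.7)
The plan is to carry out a Mermin--Wagner argument via the addition algorithm of Section~\ref{sec:the addition algorithm}, combined with the Ward identity (Theorem~\ref{thm:Ward identity}) which precludes $u_x$ from being too positive. Set $\beta=c(a_0)/a$ with $c(a_0)>0$ to be chosen small, and choose a perturbation $\tau\colon G\to[0,\infty)$ with $\tau_o=0$, supported in $B(o,L)$, growing like $\beta\log(d(o,v)/4)_+$ up to scale $\sqrt{d(o,x)}$ and frozen beyond, so that $\tau_x=\beta\log(\sqrt{d(o,x)}/4)$, $|\tau_v-\tau_w|=O(\beta/d(o,v))$ for neighbours $v\sim w$, and the Dirichlet-type sum $\sum_e(\nabla_e\tau)^2=O(\beta^2\log d(o,x))$ (the classical logarithmic divergence in two dimensions). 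Let $\eps>0$ be a small constant, let $K=K(\eps,a_0)$ be the constant from Proposition~\ref{prop:perco} (uniformly bounded for $a\ge a_0$), and invoke the addition algorithm with this $\tau$ and $K$ to produce bijections $T^\pm$ and Jacobians $J^\pm$.

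Define the good event $G:=\{u\colon M(u)\le L(\tau,2K)-2\}$. By Proposition~\ref{prop:perco} together with standard subcritical cluster tail bounds for $\eps$-percolation on $\Z^2$ with $\eps$ small, $\P(G^c)\le C\,d(o,x)^{-c(a_0)/a}$. On $G$, combining the Jacobian estimate~\eqref{eq:Jacobian estimate}, the log-convexity of $\sqrt{D(W,\cdot)}$ (Lemma~\ref{lem:log convexity}), and a Taylor expansion of $U(s)=\cosh(s)-1$ on $[-2K,2K]$ (where $U''(s)\le\cosh(2K)\le C(a_0)$ uniformly in $a\ge a_0$) gives the key product inequality
\begin{equation*}
  \sqrt{Q^+(\varphi)\,Q^-(\varphi)}\ge \exp(-\mathrm{cost}),\qquad Q^\pm:=\rho(T^\pm)J^\pm/\rho,
\end{equation*}
where $\mathrm{cost}=O(a\beta^2\log d(o,x))=O(c(a_0)^2\log d(o,x)/a)$ for $a\ge a_0$.

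Let $B=\{u\colon u_x\in[-\alpha,\alpha]\}$ with $\alpha:=\tau_x/2$. The change-of-variables identity~\eqref{eq:Jacobian_formula_properties_section} gives $\int\mathbbm{1}_B(\varphi)\,Q^\pm(\varphi)\,\rho(\varphi)\,d\varphi=\P(u\in T^\pm(B))$; combining with the product inequality and Cauchy--Schwarz,
\begin{equation*}
  \P(u\in B,\,u\in G)\le e^{\mathrm{cost}}\sqrt{\P(u\in T^+(B))\,\P(u\in T^-(B))}.
\end{equation*}
On $G$, property~\ref{property:addition_lower_bound} gives $T^+(\varphi)_x\ge\varphi_x+\tau_x-O(\beta)$, hence $T^+(B\cap G)\subset\{u_x\ge-\alpha+\tau_x-O(\beta)\}$; Markov's inequality applied to $\E e^{u_x}=1$ bounds the probability of this set by $e^{\alpha-\tau_x+O(\beta)}$. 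Bounding $\P(u\in T^-(B))\le 1$ trivially (there is no matching Ward bound for the lower tail of $u_x$), the geometric mean contributes a factor $e^{(\alpha-\tau_x)/2+O(\beta)}$, and for $c(a_0)$ small enough the total exponent $\mathrm{cost}+(\alpha-\tau_x)/2$ becomes at most $-c(a_0)\log d(o,x)/(Ca)$, yielding $\P(u\in B,\,u\in G)\le d(o,x)^{-c(a_0)/a}$. Combined with $\P(G^c)\le d(o,x)^{-c(a_0)/a}$, the lemma follows.

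The main technical obstacle will be rigorously bounding $\P(u\in T^\pm(B))$ as a whole rather than just the contribution from $T^\pm(B\cap G)$: configurations $\varphi\in G^c$, where the addition algorithm no longer approximates a $\pm\tau$ shift, must be controlled. I plan to handle this by exploiting the $T^\pm$-invariance of $G$ (property~\ref{property:Lipschitz_preservation} gives $\EC(T^\pm(\varphi))\subset\EC(\varphi)$ with threshold $2K$, hence $T^\pm(G)\subset G$) together with a Markov-type truncation on $Q^\pm$ via the identity $\E Q^\pm=1$ (from~\eqref{eq:Jacobian_formula_properties_section} applied with $g=\rho$). A further delicate point is the careful balancing of constants so that the rate $c(a_0)/a$ emerges uniformly for $a\ge a_0$, which relies crucially on the uniform bound $K\le C(a_0)$ from Proposition~\ref{prop:perco} to make $\cosh(2K)$ an $a$-independent constant.
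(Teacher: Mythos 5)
Your proposal takes essentially the same route as the paper — addition algorithm, Cauchy--Schwarz against the Jacobians, log-convexity of $\sqrt{D(W,\cdot)}$, Ward identity plus Markov — but it has one genuine gap and one misidentified obstacle.

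The genuine gap is the claimed deterministic ``key product inequality'' $\sqrt{Q^+Q^-}\ge e^{-\mathrm{cost}}$ with $\mathrm{cost}=O(a\beta^2\log d(o,x))$ on the event $G=\{M(u)\le L(\tau,2K)-2\}$. The Jacobian estimate~\eqref{eq:Jacobian estimate} bounds $\sqrt{J^+J^-}$ from below by $\exp\big(-K^{-2}\sum_v\tau'(v,1+\max_{w\sim v}r(\varphi,w))^2\big)$, and the Taylor expansion of $\cosh$ contributes a factor $\exp\big(-C(K)a\sum_{y\sim z}(i_y-i_z)^2\big)$ in which the increments $i_y$ again depend on the cluster radii via property~\ref{property:addition_lower_bound}. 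On your $G$ the cluster diameter is only bounded by $L(\tau,2K)-2\sim K\sqrt{d(o,x)}/\beta$, which is huge, so neither sum is deterministically $O(\beta^2\log d(o,x))$; they are random functionals of $\EC(u)$ and are only small with high probability. Controlling them is the content of Lemma~\ref{lem:bad event prob estimates}: the paper enlarges the good event to include explicit bounds on the Jacobians~\eqref{eq:Jacobian in E}, on the increments at the boundary of $H$~\eqref{eq:increment on boundary} (needed so that the $\rho_2$ factor on $E(G)\setminus E(H)$ cancels — a point you have not addressed, and which also requires spelling out the extension $\bar T^\pm$ of the addition algorithm outside the ball $H$, since $x$ itself lies outside $H$), and on $\sum(i_y-i_z)^2$~\eqref{eq:sum of squares of increments}, and proves these hold up to an error $C\exp(-c\,d(o,x)^{1/4})$ via the quantitative percolation estimate Lemma~\ref{lem:prob estimate on sum of radiuses}. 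Your outline skips this entirely; without it the cost term is uncontrolled.

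The obstacle you flag at the end — bounding $\P(u\in T^\pm(B))$ instead of $\P(u\in T^\pm(B\cap G))$ — is not actually an obstacle, and the elaborate fix you sketch (invariance of $G$ under $T^\pm$, Markov truncation on $Q^\pm$) is unnecessary. When you apply Cauchy--Schwarz to $\int\mathbbm{1}_{B\cap G}\sqrt{Q^+Q^-}\rho$, each resulting factor $\int\mathbbm{1}_{B\cap G}Q^\pm\rho$ equals $\P(u\in T^\pm(B\cap G))$ directly by the change-of-variables identity~\eqref{eq:Jacobian_formula_properties_section}; you just weakened to $\P(u\in T^\pm(B))$ unnecessarily. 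Keeping the good event inside gives exactly the bound you want, since $T^+(B\cap G)\subset\{u_x\ge \tau_x-\alpha-O(\beta)\}$ by property~\ref{property:addition_lower_bound}, and this is where the Ward identity and Markov's inequality are applied. This is precisely how the paper proceeds, restricting to $\mathcal{I}=\mathcal{H}\cap\mathcal{G}$ throughout.
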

The rest of the section is devoted to proving the lemma and deducing Theorem~\ref{thm:main}. Throughout we fix $a_0>0$ and assume that $a>a_0$.

We wish to use the addition algorithm from the previous section and to this end we need to specify the graph $H$, target function $\tau$ and constant $K$. Let $H$ be the induced subgraph of $G$ on the closed ball $B(o,\frac{1}{2}d(o,x))=\{y\in G\colon d(o,y)\le\frac{1}{2}d(o,x)\}$, so that $H$ is a ball in $\Z^2$ regardless of the choice of $x$. The choice to make the radius of the ball proportional to $d(o,x)$ is made in order for the parameter $M(\varphi)$ appearing in the addition algorithm to typically not be too large.
The parameter $K$ will be fixed using Lemma~\ref{lem:bad event prob estimates} below to a value depending only on $a_0$.
To specify $\tau$ we introduce a parameter $\lambda$ which will be fixed later (following~\eqref{eq:final probability estimate}) to a value of the form $c(a_0)/a$. Define $\tau:H\to[0,\infty)$ by
\begin{equation}\label{eq:tau choice}
\tau_y:=\begin{cases}
0 & d(o,y)< \sqrt{d(o,x)}\\
\lambda\log\Big(\frac{d(o,y)}{\sqrt{d(o,x)}}\Big) & \sqrt{d(o,x)}\le d(o,y)<\frac{1}{4}d(o,x)\\
\lambda\log\Big(\frac{1}{4}\sqrt{d(o,x)}\Big) & \frac{1}{4}d(o,x)\le d(o,y)\le\frac{1}{2}d(o,x).
\end{cases}
\end{equation}
The reason for taking $\tau$ to be $0$ up to a large distance from $o$ is to increase the size of the parameter $L(\tau,K)$ defined in~\eqref{eq:L_tau_eps_def}. Indeed,
\begin{equation}\label{eq:L lower bound}
  L(\tau,K)\ge \frac{K}{2\lambda}\sqrt{d(o,x)},
\end{equation}
as nearest-neighbour differences satisfy $\max_{y\sim z}|\tau_y - \tau_z|\le \lambda d(o,x)^{-1/2}$.

For these $H, \tau$ (and the parameter $K$ to be fixed below), the addition algorithm produces mappings $T^\pm:\R^\ho\to\R^\ho$ and the associated $J^\pm:\R^\ho\to[0,\infty)$. We define extensions of these maps on the whole of $\R^\go$ as follows. First, $\bar{T}^\pm:\R^\go\to\R^\go$ are defined by
\begin{equation}\label{eq:u pm def}
u^\pm_y \coloneqq \bar{T}^\pm(u)_y \coloneqq \begin{cases}
T^\pm(u|_H)(y) & d(o,y)\le \frac{1}{2}d(o,x)\\
u_y\pm\lambda\log\Big(\frac{1}{4}\sqrt{d(o,x)}\Big) & \textrm{otherwise.}
\end{cases}
\end{equation}
Second, the maps $\bar{J}^\pm:\R^\go\to[0,\infty)$ are defined by $\bar{J}^\pm(u) = J^\pm(u|_H)$. It is simple to check that the extension of Property~\ref{property:Jacobian_estimate} of the addition algorithm holds, namely that
\begin{equation}\label{eq:19half}
  \int g(\bar T^\pm(\varphi))\bar J^\pm(\varphi)\, d\varphi
  = \int g(\varphi)\,d\varphi
\end{equation}
where $d\varphi$ now stands for Lebesgue measure on $\R^{G\setminus\{o\}}$.
It is convenient to introduce a notation for the actual increments due to the addition algorithm
\begin{equation}\label{eq:increments}
i_y\coloneqq u^+_y - u_y
\end{equation}
where the reader should keep in mind that (as will be shown) $i$ is close to $\tau$ on $H$ in a suitable sense. In particular, by~\eqref{eq:T^+_T^-_increment_range}, $i_y=0$ whenever $\tau_y = 0$, i.e.,
\begin{equation}\label{eq:i y zero}
  i_y = 0\text{ when }d(o,y)<\sqrt{d(o,x)}.
\end{equation}

We require some control over the Jacobians and increments resulting from the addition algorithm and this is provided by the following definition and lemma.
\begin{definition}\label{def:G}
  For a constant $\sigma$ we define a ``good'' event $\gcal=\gcal(\sigma)\subseteq\R^\go$ as the set of all $u$ satisfying that
  \begin{align}
  &\sqrt{J^+(u)J^-(u)}\ge \frac{1}{d(o,x)^{\sigma\lambda^2}},\label{eq:Jacobian in E}\\
  &i_y = \tau_y\quad\text{for all $y$ satisfying $d(o,y) = \left\lfloor\tfrac{1}{2}d(o,x)\right\rfloor$},\label{eq:increment on boundary}\\
  &\sum_{y\sim z} (i_y - i_z)^2 \le \sigma\lambda^2\log d(o,x).\label{eq:sum of squares of increments}
\end{align}
\end{definition}
\begin{lemma}\label{lem:bad event prob estimates}
  Suppose $\lambda\le 1$. There exist absolute constants $C,c,\sigma$ and a choice of $K$ as a function solely of $a_0$ for which $\P(\gcal(\sigma))\ge 1-C\exp(-c\linebreak[0]d(o,x)^{1/4})$.
\end{lemma}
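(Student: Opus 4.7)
The plan is to reduce all three clauses in the definition of $\gcal(\sigma)$ to a deterministic geometric condition on the edge set $\EC(u)=\{\{y,z\}\colon |u_y-u_z|\ge K\}$, and then to obtain the high-probability bound from the percolation comparison of Proposition~\ref{prop:perco}.

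First I would choose the parameters. Pick $\eps>0$ strictly smaller than half the subcritical threshold for bond percolation on $\Z^{2}$, so that the union of two $\eps$-percolations on $H$ is still subcritical and exhibits exponentially decaying cluster diameters. Using Proposition~\ref{prop:perco} and the quantitative bound $K\le C(\eps,a_{0})$ stated there, set $K$ to be the constant obtained for this $\eps$; this makes $K$ a function of $a_{0}$ alone. Define the auxiliary event
\[
  \A:=\{M(u)\le d(o,x)^{1/4}\}.
\]
Standard subcritical percolation estimates together with the domination give $\P(\A^{c})\le C|H|\exp(-c\,d(o,x)^{1/4})\le C\exp(-c'\,d(o,x)^{1/4})$, since $|H|\le Cd(o,x)^{2}$. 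Because $L(\tau,K)\ge (K/2\lambda)\sqrt{d(o,x)}\gg d(o,x)^{1/4}$ by \eqref{eq:L lower bound} and the assumption $\lambda\le 1$, on $\A$ the hypothesis $M(u)\le L(\tau,K)-2$ of properties \ref{property:addition_lower_bound} and \ref{property:Jacobian_estimate} is satisfied for $d(o,x)$ sufficiently large.

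Next I would verify the three clauses of $\gcal(\sigma)$ on the event $\A$. For the boundary condition \eqref{eq:increment on boundary}, note that $\tau$ is constant on the annulus $\{d(o,\cdot)\ge d(o,x)/4\}$; since $r(u,y)\le M(u)\le d(o,x)^{1/4}$, the ball of radius $r(u,y)$ around any $y$ with $d(o,y)=\lfloor d(o,x)/2\rfloor$ lies entirely in this constant region, so $\tau'(y,r(u,y))=0$ and property \ref{property:addition_lower_bound} forces $i_{y}=\tau_{y}$. For the gradient-squared bound \eqref{eq:sum of squares of increments}, write $i_{y}-i_{z}=(\tau_{y}-\tau_{z})+(i_{y}-\tau_{y})-(i_{z}-\tau_{z})$ and use property \ref{property:addition_lower_bound} to bound the discrepancies by $\tau'(v,r(u,v))$. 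A direct calculation shows $\sum_{y\sim z}(\tau_{y}-\tau_{z})^{2}\le C\lambda^{2}\log d(o,x)$ from the logarithmic profile of $\tau$, and the remaining contribution is of the same form as the sum in \eqref{eq:Jacobian estimate}, which is what remains to be estimated.

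The main work is therefore to show that, with the required probability,
\[
  S(u):=\sum_{v\in H}\tau'\bigl(v,1+\max_{w\sim v}r(u,w)\bigr)^{2}\le \sigma\lambda^{2}K^{2}\log d(o,x),
\]
which then yields \eqref{eq:Jacobian in E} via \eqref{eq:Jacobian estimate}. The summand is nonzero only for $v$ within distance $\max_{w\sim v}r(u,w)$ of the annulus where $\tau$ varies, and on that annulus $\tau$ is $C\lambda/d(o,v)$--Lipschitz, so $\tau'(v,1+R_{v})^{2}\le C\lambda^{2}(1+R_{v})^{2}/\max(d(o,v),\sqrt{d(o,x)})^{2}$ with $R_{v}:=\max_{w\sim v}r(u,w)$. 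Summing the deterministic envelope gives $\E[S]\le C\lambda^{2}\log d(o,x)$, because the subcritical percolation estimate yields $\E[R_{v}^{2}]\le C$ uniformly in $v$, and $\sum_{v}1/\max(d(o,v),\sqrt{d(o,x)})^{2}\le C\log d(o,x)$ by summing over dyadic annuli.

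The hard part is upgrading this expectation bound to an exponential tail estimate. My approach would be to condition on the realisation of the dominating two $\eps$-percolations: cluster diameters are then i.i.d.-like with exponential tails (more precisely, the contributions of clusters that are spatially separated are independent, and overlapping clusters can be controlled by a union bound over the polynomially many possible cluster locations). This allows one to apply a standard exponential moment estimate for sums of bounded-exponential-moment random variables indexed by a sparse subcritical process and conclude that $\P(S(u)>C'\lambda^{2}\log d(o,x))\le C\exp(-c\,d(o,x)^{1/4})$; choosing $\sigma$ a suitable absolute constant then delivers \eqref{eq:Jacobian in E}. Intersecting with $\A$ and the analogous tail bound for the gradient sum \eqref{eq:sum of squares of increments} completes the proof.
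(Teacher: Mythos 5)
Your overall structure matches the paper's: an auxiliary event controlling $M(u)$ (your $\A$, the paper's $\gcal_1$), the percolation comparison from Proposition~\ref{prop:perco}, reduction of all three clauses of $\gcal(\sigma)$ to Lipschitz estimates on $\tau$ together with control of a sum of $\tau'(\cdot,r)^2$ terms, and finally a concentration estimate for that sum. The reductions you carry out (boundary clause via $\tau$ being locally constant, the gradient sum via property~\ref{property:addition_lower_bound}) are correct and essentially identical to what the paper does.

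The heart of the lemma, however, is the concentration estimate in your final paragraph, and the sketch there does not yet constitute a proof. The terms $r(y)^2/d(o,y)^2$ are far from independent: vertices in the same $\EC$-cluster have strongly correlated radii, and even nearby vertices in distinct clusters share common edges. ``Conditioning on the realisation of the dominating percolations'' makes the sum deterministic, so it is unclear what is left to concentrate; and ``a union bound over the polynomially many possible cluster locations'' is a device for showing an event does not occur at all, not for showing that a non-negative sum is small with overwhelming probability. The paper's Lemma~\ref{lem:prob estimate on sum of radiuses} handles this by a two-level decomposition that you would need to reproduce or replace: first split the sum according to the dyadic scale $r(y)\in[M,2M)$, producing subsums $S_M$; then within each $S_M$ split by the residue class of $y$ modulo $3M$, so that the indicator events $\mathcal E(y,i,M)$ appearing in each residue-class subsum depend on disjoint regions of the percolation and are genuinely independent, at which point an exponential moment bound applies; then sum back over scales. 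Without some such device to manufacture independence, the claimed exponential tail has no proof.

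Two smaller points. First, you should take $K := \max\{K_{\text{Prop.~\ref{prop:perco}}}(\eps,a_0),\,1\}$; increasing $K$ only shrinks $\EC$, so domination is preserved. Without an absolute lower bound on $K$, the factor $1/K^2$ in the Jacobian estimate~\eqref{eq:Jacobian estimate} would force the constant $\sigma$ to depend on $a_0$, contradicting the statement that $\sigma$ is absolute. Second, your bound $\E[R_v^2]\le C$ is correct but is only the expectation; as you note, it is the upgrade to an exponential tail that carries all the weight.
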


Lemma \ref{lem:bad event prob estimates} follows in a straightforward manner from the ``two dependent percolations'' picture and properties of the addition algorithm so we postpone its proof. Continuing with the proof of lemma \ref{lem:u x u o deviation bound}, denote by $\mathcal{H}$ the event to be estimated in the lemma, i.e., $\mathcal{H}$ is the set of all $u$ satisfying $|u_x|\le \frac{\lambda}{3}\log\Big(\frac{1}{4}\sqrt{d(o,x)}\Big)$. Fix $K$ and $\sigma$ as in lemma \ref{lem:bad event prob estimates}, define $\gcal$ using this $\sigma$ and define the event
\begin{equation*}
  \mathcal{I}:=\mathcal{H}\cap\mathcal{G}.
\end{equation*}
Let $\rho$ be the density of the field $u$ (``the magic formula'') as given in~\eqref{eq:density rho}.
The proof of Lemma~\ref{lem:u x u o deviation bound} makes use of the Ward identity and the fact that $\rho$ has the form
\begin{equation}\label{eq:magic formula essentials}
  \rho(u) = \rho_1(u)\cdot\rho_2(u)\cdot\rho_3(u)
\end{equation}
with
\begin{equation*}
  \rho_1(u) = \exp\left(-a\sum_{\smash{\{y,z\}\in E(H)}} \cosh(u_y - u_z)\right)
\end{equation*}
(using that $W|_{B(o,L)}\equiv a$ by assumption),
with $\rho_2$ a function of the \emph{gradients} of $u$ on the edge set $E(G)\setminus E(H)$ and with $\rho_3$ a log-convex function. We take $\rho_3=\exp(-\sum u_x)\sqrt{D(W,u)}$, a product of a log-linear term and a term whose log-convexity is justified by Lemma~\ref{lem:log convexity}.

Our analysis starts with the quantity
\[
I:=\int_{\mathcal{I}}\sqrt{\rho(u^+)\rho(u^-)\bar{J}^+(u)\bar{J}^-(u)}\,du
\]
for which we proceed to establish upper and lower bounds (again, $u\in\R^\go$ and the integration is with respect to the Lebesgue measure on $\R^\go$).
On the one hand, by the Cauchy-Schwarz inequality and~\eqref{eq:19half},
\begin{multline}\label{eq:I upper bound}
\qquad\qquad  I\le \left(\int_{\mathcal{I}}\rho(u^+)\bar{J}^+(u)du \,
      \int_{\mathcal{I}}\rho(u^-)\bar{J}^-(u)du\right)^{1/2}\\
  \stackrel{\textrm{(\ref{eq:19half})}}{=}
  \left(\P(u\in \bar{T}^+(\mathcal{I}))\,\P(u\in \bar{T}^-(\mathcal{I}))\right)^{1/2}.\qquad\qquad
\end{multline}
(recall that $\bar{T}^+, \bar{T}^-$ mean the transformations mapping $u$ to $u^+, u^-$, i.e., the transformations defined on the whole graph $G$ rather than just on the subgraph $H$.)
On the other hand, by property \eqref{eq:Jacobian in E} of the good event $\mathcal{G}$ (which contains $\mathcal{I}$),
\begin{equation}\label{eq:I pre lower bound}
I\ge \frac{1}{d(o,x)^{\sigma\lambda^2}} \int_{\mathcal{I}}\sqrt{\rho(u^+)\rho(u^-)}du
\end{equation}
and we proceed to find a lower bound for the integrand. We study the three factors in~\eqref{eq:magic formula essentials} separately. First, by log-convexity and the relation~\eqref{eq:T_+_T_-_relation} of the addition algorithm,
\begin{equation}\label{eq:rho3 lower bound}
  \sqrt{\rho_3(u^+)\rho_3(u^-)}
  \ge \rho_3\big(\tfrac{1}{2}(u^+ + u^-)\big)
  \stackrel{\mathclap{\textrm{\eqref{eq:T_+_T_-_relation}}}}{=}
  \rho_3(u).
\end{equation}
Second, by~\eqref{eq:u pm def}, we have $i_y = u^+_y - u_y = \lambda\log\Big(\frac{1}{4}\sqrt{d(o,x)}\Big)$ for all $y\in G\setminus H$. In addition, by property~\eqref{eq:increment on boundary} of the good event $\mathcal{G}$ we have $i_y = \lambda\log\Big(\frac{1}{4}\sqrt{d(o,x)}\Big)$ for all $y\in H$ with $d(o,y) = \left\lfloor\frac{1}{2}d(o,x)\right\rfloor$, i.e., those $y\in H$ which are endpoints of an edge in $E(G)\setminus E(H)$. Thus, on the event $\mathcal{I}$ the gradients of $u^+, u^-$ on $E(G)\setminus E(H)$ equal the gradients of $u$ there and we obtain
\begin{equation}\label{eq:rho2 lower bound}
  \rho_2(u^+) = \rho_2(u^-) = \rho_2(u).
\end{equation}
Lastly, we calculate
\begin{multline*}
\sqrt{\rho_1(u^+)\rho_1(u^-)}\\
= \exp\left(-a\sum_{\smash{\{y,z\}\in E(H)}} \frac{1}{2}\Big(\cosh(u_y - u_z + (i_y - i_z)) + \cosh(u_y - u_z - (i_y - i_z))\Big)\right)
\end{multline*}
To obtain a simpler expression for the summands we note that by property \ref{property:Lipschitz_preservation} of the addition algorithm, $i_y = i_z$ when $|u_y - u_z|\ge 2K$. A second-order Taylor expansion of $\cosh$ thus gives
\[
\frac{1}{2}(\cosh(u_y - u_z + (i_y - i_z)) + \cosh(u_y - u_z - (i_y - i_z))) \le \cosh(u_y - u_z) + C(K) (i_y - i_z)^2
\]
with $C(K)>0$ solely a function of $K$. For simplicity, denote all constants that depend only on $a_0$ by $C(a_0)$, in particular the $C(K)$ above. In conclusion,
\begin{equation}\label{eq:rho1 lower bound}
\sqrt{\rho_1(u^+)\rho_1(u^-)} \ge \rho_1(u) \exp\left(-C(a_0)a\sum_{\smash{\{y,z\}\in E(H)}} (i_y-i_z)^2\right).
\end{equation}
Putting together~\eqref{eq:rho3 lower bound}, \eqref{eq:rho2 lower bound} and \eqref{eq:rho1 lower bound} we thus have on $\mathcal{I}$ that
\begin{equation*}
  \sqrt{\rho(u^+)\rho(u^-)} \ge \rho(u) \exp\left(-C(a_0)a\sum_{\smash{\{y,z\}\in E(H)}} (i_y-i_z)^2\right).
\end{equation*}
Plugging this bound back into~\eqref{eq:I pre lower bound} and using property~\eqref{eq:sum of squares of increments} of the good event $\mathcal{G}$,
\begin{equation}\label{eq:I lower bound}
I \ge \frac{1}{d(o,x)^{\sigma\lambda^2(C(a_0)a+1)}} \int_\mathcal{I} \rho(u)du \ge \frac{1}{d(o,x)^{C(a_0)\lambda^2 a}} \P(u\in \mathcal{I}),
\end{equation}
where in the second inequality we compensated for removing the $+1$ and the $\sigma$ from the power by increasing $C(a_0)$ (recall that $a>a_0$ and that $\sigma$ is an absolute constant). Combining~\eqref{eq:I lower bound} with the upper bound~\eqref{eq:I upper bound} brings us to the key inequality
\begin{equation}\label{eq:key inequality}
\begin{split}
  \P(u\in \mathcal{I})&\le d(o,x)^{C(a_0)\lambda^2 a}\left(\P(u\in \bar{T}^+(\mathcal{I}))\P(u\in \bar{T}^-(\mathcal{I}))\right)^{1/2}\\
  &\le d(o,x)^{C(a_0)\lambda^2 a}\,\P(u\in \bar{T}^+(\mathcal{I}))^{1/2}.
\end{split}
\end{equation}
We develop the right-hand side of the inequality. As $\mathcal{I}=\mathcal{H}\cap\mathcal{G}$ we have
\begin{equation}\label{eq:E and B}
  \P(u\in \bar{T}^+(\mathcal{I}))\le \P(u\in \bar{T}^+(\mathcal{H})).
\end{equation}
Further recalling that $\mathcal{H}$ is the set of $u$ satisfying $|u_x|\le \frac{\lambda}{3}\log\Big(\frac{1}{4}\sqrt{d(o,x)}\Big)$ and that $\bar{T}^+$ is given by~\eqref{eq:u pm def} we have that
\begin{equation}\label{eq:T plus B}
\begin{split}
  \P(u\in \bar{T}^+(\mathcal{H}))&= \P\left(\Big|u_x - \lambda\log\Big(\frac{1}{4}\sqrt{d(o,x)}\Big)\Big|\le \frac{\lambda}{3}\log\Big(\frac{1}{4}\sqrt{d(o,x)}\Big)\right)\\
  &\le \P\left(u_x\ge \frac{2}{3}\lambda\log\Big(\frac{1}{4}\sqrt{d(o,x)}\Big)\right).
\end{split}
\end{equation}
Putting together~\eqref{eq:E and B} and~\eqref{eq:T plus B} and making use of Markov's inequality and the Ward identity (Theorem~\ref{thm:Ward identity}) now shows that
\begin{equation*}
  \P(u\in \bar{T}^+(\mathcal{I}))
  \le \exp\Big(-\frac{2}{3}\lambda\log\Big(\frac{1}{4}\sqrt{d(o,x)}\Big)\Big)
  = \left(\frac{16}{d(o,x)}\right)^{\lambda/3}.
\end{equation*}
Combining this inequality with \eqref{eq:key inequality} we get
\begin{equation}\label{eq:final probability estimate}
\P(u\in\mathcal{I})\le d(o,x)^{C(a_0)\lambda^2 a-\lambda/6}\cdot 4^{\lambda/3}.
\end{equation}
We see that for $\lambda\le c(a_0)/a$ for some positive $c(a_0)$ sufficiently small, the power becomes negative (and we may also ensure that $\lambda\le 1$, to satisfy the assumption of Lemma~\ref{lem:bad event prob estimates}, by taking $c(a_0)\le a_0$). Fix $\lambda$ to such a value. The proof of Lemma~\ref{lem:u x u o deviation bound} is now finished since, by Lemma~\ref{lem:bad event prob estimates},
\begin{equation*}
  \P(u\in \mathcal{H}) \le \P(u\in\mathcal{I}) + \P(u\notin \mathcal{G})\le Cd(o,x)^{-c(a_0)/a}+C\exp(-cd(o,x)^{1/4})
\end{equation*}
and the second term is negligible.

\begin{proof}[Proof of Theorem \ref{thm:main}]
\phantomsection\label{pg:proof}
Fix $a_0>0$ and suppose that $a>a_0$. By Lemma~\ref{lem:u x u o deviation bound} there exist $C, c_1(a_0)>0$ so that
\begin{equation}\label{eq:probability for u_x close to u_o}
    \P\left(|u_x|\le t\right)\le C\, d(o,x)^{-c_1(a_0)/a}.
\end{equation}
with
\begin{equation*}
  t:=\frac{c_1(a_0)}{a}\log\Big(\frac{1}{4}\sqrt{d(o,x)}\Big).
\end{equation*}
The probability that $u_x$ is large can be bounded by the Ward identity (Theorem \ref{thm:Ward identity}) and Markov's inequality:
\begin{equation}\label{eq:WardMarkov}
  \P(u_x>t)\le\frac{\E(e^{u_x})}{e^t}=e^{-t}.
\end{equation}
Together \eqref{eq:probability for u_x close to u_o} and \eqref{eq:WardMarkov} show \eqref{eq:u_x greater than u_0 prob}. To further deduce~\eqref{eq:fractional exponential moment expect} we write
\begin{equation*}
  \E\left(e^{\frac{1}{2}u_x}\right)
  =\E\left(e^{\frac{1}{2}u_x}\mathbbm{1}_{u_x\in I_1}\right) + \E\left(e^{\frac{1}{2}u_x}\mathbbm{1}_{u_x\in I_2}\right) + \E\left(e^{\frac{1}{2}u_x}\mathbbm{1}_{u_x\in I_3}\right)
\end{equation*}
where $I_1 := (-\infty, -t)$, $I_2 := [-t,s]$, $I_3 := (s,\infty)$, $s := \min\{t,\frac{c_1(a_0)}{a} \log(d(o,x))\}$ and $t$ is as before. We trivially have
\begin{equation*}
  \E\left(e^{\frac{1}{2}u_x}\mathbbm{1}_{u_x\in I_1}\right)\le e^{-t/2}.
\end{equation*}
Using~\eqref{eq:probability for u_x close to u_o} we have
\begin{equation*}
  \E\left(e^{\frac{1}{2}u_x}\mathbbm{1}_{u_x\in I_2}\right)
  \le e^{s/2}\P(u_x\in I_2)
  \stackrel{\mathclap{\textrm{\eqref{eq:probability for u_x close to u_o}}}}{\le}
  C\frac{d(o,x)^{c_1(a_0)/2a}}{d(o,x)^{c_1(a_0)/a}}
  =\frac{C}{d(o,x)^{c_1(a_0)/2a}}.
\end{equation*}
For $I_3$ we use the Ward identity to get
\[
\E(e^{\frac12 u_x}\mathbbm{1}_{u_x>s})
\le e^{-s/2}\E(e^{u_x}\mathbbm{1}_{u_x>s})
\le e^{-s/2}\E(e^{u_x}) = e^{-s/2}.
\]
The inequality~\eqref{eq:fractional exponential moment expect} follows by combining the last four displayed equations and plugging the definitions of $t$ and $s$.
\end{proof}

\section{Properties of a union of percolations}\label{sec:percolation2}

In this section we discuss two specific quantitative ways in which the union of $\eps$-percolations is sparse, which are required for the proof of lemma \ref{lem:bad event prob estimates}. Our analysis takes the underlying graph to be the whole square lattice as this suffices for our purposes.

Let $\CP_1,\CP_2$ be two (dependent) $\eps$-percolations on $\Z^2$. Write $\CP$ for their union. Define the radius of connected components in $\CP$ by
\begin{equation*}
  r(y):=\max\{d(y,z)\colon \text{$z$ is connected to $y$ by edges in $\CP$}\},\quad y\in\Z^2.
\end{equation*}
\begin{lemma}\label{lem:subcriticality}
  There exists $\eps_0>0$ such that if $\eps\le\eps_0$ then
  \begin{equation*}
    \P(r(y)\ge k)\le e^{-k}\quad\text{for $y\in\Z^2$ and integer $k\ge 1$}.
  \end{equation*}
\end{lemma}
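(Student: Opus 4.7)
The plan is a classical Peierls-type union bound over self-avoiding walks (SAWs), with the only subtlety being that the two $\eps$-percolations are dependent.

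First I would reduce to counting SAWs: if $r(y)\ge k$ then the $\CP$-cluster of $y$ contains a vertex at graph distance $\ge k$, so there is a SAW of length exactly $k$ starting at $y$ all of whose edges lie in $\CP=\CP_1\cup\CP_2$ (take any simple path in the cluster from $y$ to a distance-$k$ vertex and truncate). Since the number of length-$k$ SAWs starting at a given vertex of $\Z^2$ is at most $4\cdot 3^{k-1}$, a union bound reduces matters to estimating $\P(\gamma\subseteq\CP)$ for a fixed SAW $\gamma=(e_1,\ldots,e_k)$.

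The core step is the per-path estimate. Because $\CP_1$ and $\CP_2$ may be coupled arbitrarily, the events $\{e_i\in\CP\}$ are not independent, so one cannot simply write $\P(\gamma\subseteq\CP)\le(2\eps)^k$. Instead, I would use the pointwise indicator inequality $\1_{e_i\in\CP}\le\1_{e_i\in\CP_1}+\1_{e_i\in\CP_2}$, expand the product over $i$, and take expectations to obtain
\[
\P(\gamma\subseteq\CP)\le\sum_{\sigma\in\{1,2\}^k}\P\bigl(S_1(\sigma)\subseteq\CP_1,\;S_2(\sigma)\subseteq\CP_2\bigr),
\]
where $S_j(\sigma):=\{i:\sigma(i)=j\}$. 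For each $\sigma$, the crude bound $\P(A\cap B)\le\min(\P(A),\P(B))$ together with the \emph{internal} independence of the edges of a single percolation gives
\[
\P\bigl(S_1\subseteq\CP_1,\,S_2\subseteq\CP_2\bigr)\le\min\bigl(\eps^{|S_1|},\eps^{|S_2|}\bigr)\le\sqrt{\eps^{|S_1|}\eps^{|S_2|}}=\eps^{k/2}.
\]
Summing over the $2^k$ choices of $\sigma$ yields $\P(\gamma\subseteq\CP)\le(2\sqrt\eps)^k$, and combining with the SAW union bound gives $\P(r(y)\ge k)\le\tfrac{4}{3}(6\sqrt\eps)^k$.

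Choosing $\eps_0$ so that $6\sqrt{\eps_0}\le e^{-2}$ (for instance $\eps_0=1/(36e^4)$) makes the right-hand side at most $\tfrac{4}{3}e^{-2k}\le e^{-k}$ for all integers $k\ge 1$, as required. The only obstacle worth flagging is the dependence between $\CP_1$ and $\CP_2$: it costs a factor of $\eps^{-k/2}$ in the per-path estimate compared with the independent case, but since we are free to shrink $\eps_0$ arbitrarily, this loss is harmless and the subcriticality of the connective constant (bounded by $3$) of $\Z^2$ survives.
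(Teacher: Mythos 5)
Your proof is correct and follows essentially the same approach as the paper: a Peierls-type union bound over self-avoiding paths, with the per-path estimate $\P(\gamma\subseteq\CP)\le 2^k\eps^{k/2}$ to handle the dependence between the two percolations. Your packaging via indicator expansion and the $\min\le$ geometric-mean trick is equivalent to the paper's pigeonhole argument (some $i$ has at least $\lceil k/2\rceil$ of the edges in $\CP_i$) followed by a union bound over subsets.
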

\begin{proof}
  The event in question entails the existence of a simple path $\gamma$ with $k$ edges of $\CP$ starting from $y$. In this case there is some $i\in\{1,2\}$ such that at least $\lceil k/2\rceil$ of the edges of $\gamma$ are in $\CP_i$. For a fixed $\gamma$ and $i$ this probability can be bounded by $\eps^{k/2}2^k$. Summing over $\gamma$ (for which there are less than $4^k$ possibilities) and $i$ gives
  \begin{equation*}
    \P(r(y)\ge k)\le 2\cdot 8^k\cdot \eps^{k/2}.
  \end{equation*}
For $\eps$ sufficiently small, this is smaller than $e^{-k}$ for all $k\ge 1$.
\end{proof}

\begin{lemma}\label{lem:prob estimate on sum of radiuses}
There exist $\varepsilon_0, C, c>0$ such that if $\varepsilon\le\varepsilon_{0}$ then for all $\ell>0$,
\begin{equation}\label{eq:prob estimate on sum of radiuses}
\mathbb{P}\Big(\sum_{y\colon \ell\le d(o,y)\le\ell^{2}}\frac{r(y)^{2}}{d(o,y)^{2}}\ge\log\ell\Big)\le Ce^{-c\,\ell^{1/2}}.
\end{equation}
\end{lemma}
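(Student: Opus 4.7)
The plan is a truncation followed by a Chernoff bound whose main ingredient is the path expansion used in the proof of Lemma~\ref{lem:subcriticality}. Set $R := \tfrac{1}{2}\ell^{1/2}$ and $A := \{y : \ell \le d(o,y) \le \ell^2\}$, so that $|A| \le C\ell^4$. Applied to Lemma~\ref{lem:subcriticality}, a union bound gives $\P(\max_{y \in A} r(y) > R) \le C\ell^4 e^{-R} \le Ce^{-c\ell^{1/2}}$ once $\ell$ is large enough to absorb the polynomial factor. On the complementary event the sum in~\eqref{eq:prob estimate on sum of radiuses} equals $T := \sum_{y \in A} r_R(y)^2/d(o,y)^2$ with $r_R := \min(r, R)$, so it suffices to prove $\P(T \ge \log \ell) \le Ce^{-c\ell^{1/2}}$.

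The single-site MGF is obtained by writing $r_R(y)^2 = \sum_{k=1}^R (2k-1)\mathbbm{1}[r(y) \ge k]$ and using $e^{uk^2} - e^{u(k-1)^2} \le u(2k-1)e^{uk^2}$ together with Lemma~\ref{lem:subcriticality}: for $u \le 1/(2R)$,
\[
\E[e^{u\, r_R(y)^2}] \le 1 + u C_\eps \le e^{u C_\eps},
\]
where $C_\eps$ can be made arbitrarily small by shrinking $\eps_0$ in Lemma~\ref{lem:subcriticality} (the proof of that lemma in fact gives $\P(r(y) \ge k) \le (8\sqrt\eps)^k$-type tails, so $C_\eps \to 0$ with $\eps$). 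Rescaling yields $\E[e^{\theta r_R(y)^2/d(o,y)^2}] \le \exp(\theta C_\eps / d(o,y)^2)$ for $\theta \le d(o,y)^2/(2R)$, a range containing all $\theta \le \ell^{3/2}$. If the $r_R(y)$'s were independent, multiplying these single-site MGFs would give $\E[e^{\theta T}] \le \exp(C\theta C_\eps \log \ell)$ (using $\sum_{y \in A} d(o,y)^{-2} \lesssim \log \ell$), and Chernoff with $\theta = c\ell^{1/2}$ would yield $\P(T \ge \log \ell) \le \exp(-c\ell^{1/2}(1 - CC_\eps)\log \ell)$, well within the target.

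To remove the independence assumption, I would partition $A$ into $K = O(R^2)$ classes of vertices pairwise at graph-distance $\ge 2R+1$, apply H\"older's inequality $\E[e^{\theta T}] \le \prod_c \E[e^{K\theta T_c}]^{1/K}$ (which restricts $\theta$ to $\lesssim \ell^{3/2}/K = \ell^{1/2}$), and within each class replace the product MGF bound by a joint-moment estimate coming from the path expansion: for simple paths $\gamma_1, \ldots, \gamma_n$ in $\Z^2$,
\[
\P\bigl(\gamma_1 \cup \cdots \cup \gamma_n \subseteq \CP\bigr) \le 2(2\sqrt\eps)^{|\gamma_1 \cup \cdots \cup \gamma_n|},
\]
since at least half the edges of the union must lie in one of $\CP_1, \CP_2$, within each of which edges are i.i.d. When the $y_i$ lie in a single class and are distinct, the $\gamma_i$ of length $k_i \le R$ starting at $y_i$ are automatically edge-disjoint, so $|\cup \gamma_i| = \sum_i k_i$; expanding $r_R(y_i)^2 = \sum_k (2k-1) \mathbbm{1}[r(y_i) \ge k]$ gives $\E[\prod_i r_R(y_i)^2] \le C \cdot C_\eps^n$, which substituted into the Taylor series of $\E[e^{K\theta T_c}]$ reproduces the independent-case bound up to constants.

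The main obstacle is precisely this dependence step: only the marginals of $\CP_1$ and $\CP_2$ are i.i.d.\ while their joint coupling is unrestricted, so edges of $\CP = \CP_1 \cup \CP_2$ supported on disjoint balls need not be independent and the naive product MGF bound fails; the path expansion supplies the needed joint-moment substitute by exploiting i.i.d.-ness within each $\CP_i$ separately. Secondary nuisances are (i) tuples $(y_i)$ with repeated coordinates, for which $\E[r_R(y)^{2n}] \le (2n)!/M^{2n}$ gives a negligible contribution at the relevant scale $n \sim \ell^{1/2}$; and (ii) the need to shrink $\eps_0$ so that Lemma~\ref{lem:subcriticality} actually provides the stronger tail $\P(r(y) \ge k) \le e^{-Mk}$ for a prescribed large $M$, which its proof supports directly.
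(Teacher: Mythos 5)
Your approach is correct in substance and genuinely differs from the paper's. The paper first performs a dyadic decomposition over the scale $M$ of the radius, replaces $\{r(y)\ge M\}$ by the union over $i\in\{1,2\}$ of the events $\mathcal{E}(y,i,M)$ (path with at least half its edges in a single $\CP_i$), then for \emph{fixed} $i$ subdivides the vertices by residue $\bmod\,3M$ so that the indicators $\mathbbm{1}[\mathcal{E}(y,i,M)]$ become genuinely independent (depending on $\CP_i$ in disjoint balls, and $\CP_i$ itself is i.i.d.), and finally applies a standard exponential-moment bound to each $(v,i,M)$ and sums. You instead truncate once at $R=\tfrac12\ell^{1/2}$, partition into $O(R^2)$ residue classes to decorrelate at scale $2R+1$, apply H\"older across classes, and within a class handle the remaining dependence \emph{without} fixing $i$, via the joint path estimate $\P(\gamma_1\cup\cdots\cup\gamma_n\subseteq\CP)\le 2(2\sqrt\eps)^{|\gamma_1\cup\cdots\cup\gamma_n|}$. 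Both proofs ultimately rest on the same path-counting input, but yours collapses the dyadic sum over $M$ into a single truncation, at the cost of requiring the joint-moment argument (the paper's ``fix $i$ then get honest independence'' avoids that). Your rate $e^{-c\ell^{1/2}}$ arises directly from the truncation level $R$; the paper's arises from how far the dyadic scales go.

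One small suggestion that tightens the last step: rather than expanding $\E[e^{K\theta T_c}]$ as a Taylor series in $T_c$ (which produces mixed moments $\E[\prod r_R(y_i)^{2m_i}]$ with repeated coordinates — the nuisance you flag), it is cleaner to apply the per-vertex identity $e^{\alpha r_R(y)^2}=1+\sum_{k=1}^R\bigl(e^{\alpha k^2}-e^{\alpha(k-1)^2}\bigr)\mathbbm{1}[r(y)\ge k]$ and multiply over $y$ in the class. Expanding the product gives a sum over subsets $S\subseteq c$ of $\prod_{y\in S}\sum_k\beta_{y,k}\mathbbm{1}[r(y)\ge k]$, so only joint \emph{probabilities} $\P(\forall y\in S,\ r(y)\ge k_y)$ appear, each of which your path expansion bounds by $2\prod_{y\in S}(8\sqrt\eps)^{k_y}$; the repeated-coordinate issue never arises and one directly recovers $\E[e^{K\theta T_c}]\le 2\prod_{y\in c}(1+\alpha_y C_\eps)$.
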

(the value $\frac{1}{2}$ can be improved easily, but this is not
useful for us).
\begin{proof}
We assume that $\ell$ is sufficiently large as otherwise the claim is trivial. Denote the sum in~\eqref{eq:prob estimate on sum of radiuses} by $S$. We proceed to upper bound $S$ by sums involving simpler random variables. Let $M\ge 1$ be a parameter and write
\[
S_{M}:=\sum_{\ell\le d(o,y)\le\ell^{2}}\frac{r(y)^2}{d(o,y)^{2}}\mathbbm{1}\{r(y)\in[M,2M)\},
\]
so that $S=\sum_{m=0}^{\infty}S_{2^{m}}$.
Observe that
\begin{equation}\label{eq:SM by at least M}
S_{M}\le 4M^2\sum_{\ell\le d(o,y)\le\ell^{2}}\frac{1}{d(o,y)^{2}}\mathbbm{1}\{r(y)\ge M\}.
\end{equation}
Denote by $\mathcal{E}(y,i,M)$ the event that there is a simple path $\gamma$ from $y$ to some $z$ with $d(y,z)=M$ with at least half of the edges of $\gamma$ in $\CP_i$. As in the proof of Lemma~\ref{lem:subcriticality} we have
\begin{equation*}
  \{r(y)\ge M\}\subset\mathcal{E}(y,1,M)\cup\mathcal{E}(y,2,M)
\end{equation*}
so that
\begin{equation}
S_{M}\le\sum_{i=1}^{2}4M^2\sum_{\ell\le d(o,y)\le\ell^{2}}\frac{\mathbbm{1}\{\mathcal{E}(y,i,M)\}}{d(o,y)^{2}}.\label{eq:SM by i}
\end{equation}
The proof of Lemma \ref{lem:subcriticality} also implies that, for $\eps\le\eps_0$,
\begin{equation}\label{eq:Eyi}
  \mathbb{P}(\mathcal{E}(y,i,M))\le e^{-M}.
\end{equation}
For $M$ small we also need the fact that for every $\delta>0$
there exists an $\varepsilon_{1}(\delta)$ such that $\varepsilon\le \varepsilon_{1}$($\delta)$ implies $\mathbb{P}(\mathcal{E}(y,i,M))\le\delta$, which holds for any $M\ge 1$. This is also proved exactly like Lemma \ref{lem:subcriticality}.

Going back to $S_{M}$, we further subdivide (\ref{eq:SM by i})
according to the value of the coordinates of $y$ modulo $3M$, defining
\[
S_{v,i,M}:=\sum_{\substack{\ell\le d(o,y)\le\ell^{2}\\
y\equiv v\mod 3M
}
}\frac{\mathbbm{1}\{\mathcal{E}(y,i,M)\}}{d(o,y)^{2}},\quad v\in[0,3M)^{2}.
\]
The events in this last sum are independent (each $\mathcal{E}(y,i,M)$
depends only on $\CP_i$ in $B(y,M)$ and these subsets are disjoint). Hence any of the standard methods may lead to the following estimate: for every $s>2\mathbb{E}S_{v,i,m}$,
\[
\mathbb{P}\Big(S_{v,i,M}>s\Big)\le C\exp(-cs\ell^{2}).
\]
(we used exponential moments, i.e.\ wrote $\mathbb{P}(S>s)\le\mathbb{E}(\exp(\mu(S-\mathbb{E}(S)))\linebreak[0]\exp(-\mu(s-\mathbb{E}(S)))$ with $\mu=c\ell^2$, but any other standard method would give a usable estimate).
Summing over $i$ and $v$ and using~\eqref{eq:SM by i} gives
\[
\mathbb{P}(S_{M}>18M^{2}s)\le CM^{2}\exp(-cs\ell^{2}).
\]
We use this inequality for $s=\frac{1}{36}M^{-3}\log\ell$, and note that
if $\varepsilon$ is sufficiently small then the condition $s>2\mathbb{E}S_{v,i,M}$
will be satisfied: indeed, for every $M\ge1$,
by summing (\ref{eq:Eyi}) over $y$, $\mathbb{E}S_{v,i,M}\le Ce^{-M}\log\ell$,
while for $M$ small the fact that $\mathbb{P}(\mathcal{E}(y,i,M)$)
can be made as small as needed by reducing $\varepsilon$ allows to make
$\mathbb{E}S_{v,i,M}\le C\delta\log\ell$
for any $\delta>0$. We get
\[
\mathbb{P}(S_{M}>\tfrac{1}{2M}\log\ell)\le CM^{2}\exp(-cM^{-3}\ell^{2}\log\ell).
\]
Summing over $M=1,2,4,\dotsc,2^{k}$ for $k=\lfloor\log_{2}\ell^{1/2}\rfloor$
gives
\[
\mathbb{P}\Big(\sum_{m=0}^{k}S_{2^{m}}>\log\ell\Big)\le C\exp(-c\ell^{1/2}\log\ell).
\]
Finally, the probability that $S_{M}>0$ for any $M\ge 1$ (in particular, for $M>2^{k}$) is no
more than $C\ell^{4}e^{-M}$ by~\eqref{eq:SM by at least M} and~\eqref{eq:Eyi}.
This establishes the lemma.
\end{proof}

\section{Proof of Lemma~\ref{lem:bad event prob estimates}}
Fix $\eps$ to be the minimum of the constants $\eps_0$ from Lemma~\ref{lem:subcriticality} and Lemma~\ref{lem:prob estimate on sum of radiuses}.
Fix $K=K(a_0)$ using proposition \ref{prop:perco} to
\begin{equation}\label{eq:K def}
K:= \max\big\{\sup\{K_{\textrm{Proposition \ref{prop:perco}}}(\eps,a)\colon a>a_0\},\,1\big\}.
\end{equation}
Recall from the addition algorithm the notations $\EC(u)$ \eqref{eq:EC_def} and $r(u,y)$ \eqref{def:rAndm} which we write here as $\EC_K$ and $r(y)$, respectively.
\def\rk{{r}}
We may apply to $\EC_K$ the probability estimates of Lemma~\ref{lem:subcriticality} and Lemma~\ref{lem:prob estimate on sum of radiuses} as, by Proposition~\ref{prop:perco}, $\EC_K$ is dominated by the union of two $\eps$-percolations and as $H$ is a subgraph of $\Z^2$.

The event $\gcal$ is comprised of 3 parts (recall Definition \ref{def:G}), and it will be convenient to name them $\gcal_2,\gcal_3,\gcal_4$ so, for example, $\gcal_2=\{u:\sqrt{J^+(u)J^-(u)}\ge d(o,x)^{-\sigma\lambda^2}\}$. We reserved $\gcal_1$ to the following auxiliary event (recall the definition of $M$ from \eqref{eq:M phi def}),
\[
\gcal_1=\{M(u|_H)\le\tfrac14\sqrt{d(o,x)}-2\}.
\]
Using Lemma~\ref{lem:subcriticality},
\begin{equation*}
  \P(\gcal_1^c)\le |H|\exp\left(-\tfrac14\sqrt{d(o,x)}-2\right)\le C\exp\left(-c\sqrt{d(o,x)}\right).
\end{equation*}
For the rest of the proof it is important to note that the bound~\eqref{eq:L lower bound} and the fact that $\lambda\le 1$ (by assumption) and $K\ge 1$ (by~\eqref{eq:K def}) show that
\begin{equation*}
  \gcal_1\subseteq\left\{M\left(u|_H\right)\le L(\tau, K)-2\right\}.
\end{equation*}
This is important because properties \ref{property:addition_lower_bound} and \ref{property:Jacobian_estimate} of the addition algorithm rely on this assumption, so most of the argument will work only on $\gcal_1$.

Let us start by bounding $\P(\gcal_3)$ (recall \eqref{eq:increment on boundary}). Let $y\in H$ satisfy $d(o,y)=\left\lfloor\frac{1}{2}d(o,x)\right\rfloor$. Property \ref{property:addition_lower_bound} of the addition algorithm implies that on $\gcal_1$, if $i_y\neq \tau_y$ then $\tau'(y,r(y))>0$. Since $\tau$ is constant on $B(y,\frac15 d(o,x))$ (see~\eqref{eq:tau choice}), this can only be if $r(y)\ge \frac{1}{5}d(o,x)$. Thus, relying again on Lemma~\ref{lem:subcriticality},
\begin{equation*}
  \P(\{i_y\neq \tau_y\}\cap\gcal_1)
  \le \P\left(r(y)\ge\tfrac15 d(o,x)\right)
  \le\exp\left(-\tfrac{1}{5}d(o,x)\right).
\end{equation*}

Before estimating $\gcal_2$ and $\gcal_4$ let us first discuss the discrete derivative of $\tau$. One checks that if $y,z$ are neighbours in $H$ then
$|\tau_y - \tau_z| \le 2\lambda/d(o,y)$.
Summing this gives (recall~\eqref{eq:tau_prime_def})
\begin{equation}\label{eq:tau long range differences}
  \tau'(y, k)\le \frac{3\lambda k}{d(o,y)},\quad 0\le k\le \sqrt{d(o,x)},
\end{equation}
using that $d(o,x)$ is sufficiently large. In addition,
\begin{equation}\label{eq:tau sum of squared differences}
  \sum_{\{y,z\}\in E(H)} (\tau_y-\tau_z)^2\le C\lambda^2\log(d(o,x)),
\end{equation}
again using that $d(o,x)$ is large. Lastly, the fact that $\tau_y=0$ when $d(o,y)<\sqrt{d(o,x)}$ implies that
\begin{equation}\label{eq:tau prime zero}
  \tau'(y,k)=0\quad \text{ when }\quad d(o,y)+k<\sqrt{d(o,x)}.
\end{equation}

We proceed to estimate $\P(\gcal_2)$ (recall \eqref{eq:Jacobian in E}). By~\eqref{eq:Jacobian estimate}, \eqref{eq:tau long range differences} and \eqref{eq:tau prime zero} we have on $\gcal_1$ that
\begin{equation*}
\begin{split}
  \sqrt{J^+(u)J^-(u)}
  &\;\stackrel{\mathclap{\textrm{\eqref{eq:Jacobian estimate}}}}{\ge}\;
  \exp\bigg(-\frac{1}{K^2}\sum_{y\in H} \tau'\Big(y,1+\max_{z\sim y}\rk(z)\Big)^2\bigg)\\
  &\;\stackrel{\mathclap{\textrm{(\ref{eq:tau long range differences},\ref{eq:tau prime zero})}}}{\ge}\;
   \exp\bigg(-\frac{9\lambda^2}{K^2}\sum_{d(o,y)\ge\frac{3}{4}\sqrt{d(o,x)}} \frac{(1+\max_{z\sim y}\rk(z))^2}{d(o,y)^2}\bigg).
\end{split}
\end{equation*}
For each $y\in H$,
\begin{equation*}
  \big(1+\max_{z\sim y}\rk(z)\big)^2
  \le 2+2\big(\max_{z\sim y}\rk(z)\big)^2
  \le 2 + 2\sum_{z\sim y} \rk(z)^2
\end{equation*}
whence
\begin{equation*}
\begin{split}
  \sum_{d(o,y)\ge\frac{3}{4}\sqrt{d(o,x)}} \frac{(1+\max_{z\sim y}\rk(z))^2}{d(o,y)^2}
  &\le C\sum_{d(o,y)\ge\frac{3}{4}\sqrt{d(o,x)}-1}
        \left(\frac{1}{d(o,y)^2} + \frac{\rk(y)^2}{d(o,y)^2}\right)\\
  &\le C\log(d(o,x)) + C\sum_{d(o,y)\ge\frac{3}{4}\sqrt{d(o,x)}-1}\frac{\rk(y)^2}{d(o,y)^2}.
\end{split}
\end{equation*}
so
\begin{align*}
  \lefteqn{\P(\gcal_2^c\cap\gcal_1)=\P\left(\Big\{\sqrt{J^+(u)J^-(u)}
    < \frac{1}{d(o,x)^{\sigma\lambda^2}}\Big\}\cap\gcal_1\right)}\quad&\\
  &\le \P\bigg(\exp\bigg(-\frac{9\lambda^2}{K^2}\bigg(C\log(d(o,x))+
  C\sum_{d(o,y)\ge\frac34\sqrt{d(o,x)}-1}\frac{r(y)^2}{d(o,y)^2}\bigg)\bigg)
  <\frac{1}{d(o,x)^{\sigma\lambda^2}}\bigg)\\
  &= \P\bigg(\sum_{d(o,y)\ge\frac{3}{4}\sqrt{d(o,x)}-1}\frac{\rk(y)^2}{d(o,y)^2} > \Big(\frac{\sigma K^2}{9C}-1\Big)\log(d(o,x))\bigg).
\end{align*}
Taking $\sigma$ large, as an absolute constant, will make $\sigma K^2/9C\ge \frac{3}{2}$ (recall that $K\ge 1$) and allow to apply Lemma~\ref{lem:prob estimate on sum of radiuses} (taking into account that $H$ is the induced subgraph of $\Z^2$ on $B(o,\frac{1}{2}d(o,x))$). We get that $\P(\gcal_1\setminus\gcal_2)\le C\exp(-c d(o,x)^{1/4})$.

We finally turn to the estimate of $\P(\gcal_4)$ (recall \eqref{eq:sum of squares of increments}). Property~\ref{property:addition_lower_bound} of the addition algorithm shows that on $\gcal_1$, for $y,z\in H$, $y\sim z$,
\begin{equation*}
\begin{split}
  (i_y - i_z)^2&\le \max\left\{\tau_y - \tau_z + \tau'(z, \rk(z)),\, \tau_z - \tau_y + \tau'(y, \rk(y))\right\}^2\\
  &\le 2\left((\tau_y-\tau_z)^2 + \tau'(y, \rk(y))^2 + \tau'(z, \rk(z))^2\right).
\end{split}
\end{equation*}
Thus, still on $\gcal_1$, we may use~\eqref{eq:tau sum of squared differences}, \eqref{eq:tau prime zero} and~\eqref{eq:tau long range differences} to obtain
\begin{equation*}
\begin{split}
  \sum_{\{y,z\}\in E(H)} (i_y - i_z)^2 &
  \;\stackrel{\textrm{\clap{\eqref{eq:tau sum of squared differences}}}}{\le}\;
  C\lambda^2\log(d(o,x)) + 8\sum_{y\in H}\tau'(y, \rk(y))^2\\
  &\;\stackrel{\textrm{\clap{(\ref{eq:tau long range differences},\ref{eq:tau prime zero})}}}{\le}\;
  C\lambda^2\log(d(o,x)) + 72\lambda^2\sum_{d(o,y)\ge\frac{3}{4}\sqrt{d(o,x)}}\frac{\rk(y)^2}{d(o,y)^2}.
\end{split}
\end{equation*}
Again, taking $\sigma$ large, as an absolute constant, we deduce from Lemma~\ref{lem:prob estimate on sum of radiuses} that
\begin{multline*}
  \P(\gcal_4^c\cap\gcal_1)
  =\P\Bigg(\Big\{\sum_{\substack{y,z\in H\\y\sim z}} (i_y - i_z)^2
    > \sigma\lambda^2\log d(o,x)\Big\}\cap\gcal_1\Bigg)\\
    \le \P\Bigg(\sum_{d(o,y)\ge\frac{3}{4}\sqrt{d(o,x)}}\frac{\rk(y)^2}{d(o,y)^2}
    > \frac{\sigma-C}{C}\log(d(o,x))\Bigg)\le C\exp(-cd(o,x)^{1/4}).
\end{multline*}
Combining the estimates on the probabilities of $\gcal_2$, $\gcal_3$ and $\gcal_4$, the lemma is proved. \qed

\section*{Acknowledgements}
We are grateful to Thomas Spencer who introduced us to the question of the decay rate of the weights in the VRJP model and suggested that techniques of Mermin--Wagner type may be applicable due to the log-convexity of the determinant. Michael Aizenman explained to RP the idea that fluctuation lower bounds and a priori upper bounds on the field can lead to a proof of decay, as implemented in~\cite{S09}. We thank Christophe Sabot for encouragement in the writing process.
GK is supported by the Israel Science Foundation, by the Jesselson Foundation and by Paul and Tina Gardner. RP is supported by the Israel Science Foundation grants 861/15 and 1971/19 and by the European Research Council starting grant 678520 (LocalOrder).

\end{document}